\renewcommand{\thesubfigure}{\thefigure.\arabic{subfigure}}
\renewcommand{\p@subfigure}{}
\renewcommand{\@thesubfigure}{\thesubfigure:\hskip\subfiglabelskip}
\newcommand{\near}{\delta} 
\newcommand{\dnear}{\delta_{\Phi}} 
\newcommand{\dcap}{\mathop{\cap}\limits_{\Phi}} 
\newcommand{\assign}{\mathrel{\mathop :}=}
\newcommand{\sn}{\mathop{\delta}\limits^{\doublewedge}} 
\newcommand{\snd}{\mathop{\delta_{_{\Phi}}}\limits^{\doublewedge}} 
\newcommand{\eot}{\qquad \textcolor{blue}{\Squaresteel}}
\newcommand{\ponear}[1]{\mathop{\check{\delta}}\limits_{#1}}
\newcommand{\tonear}[1]{\mathop{\bar{\delta}}\limits_{#1}}
\newcommand{\cnear}[1]{\mathop{\mathring{\delta}}\limits_{#1}}
\newtheorem{example}{Example}
\newtheorem{definition}{Definition}
\newtheorem{lemma}{Lemma}
\newtheorem{theorem}{Theorem}
\begin{document}

\title[Order Induced Proximity]{Proximity Induced by Order Relations}

\author[M.Z. Ahmad]{M.Z. Ahmad$^{\alpha}$}
\email{ahmadmz@myumanitoba.ca}
\address{\llap{$^{\alpha}$\,}
Computational Intelligence Laboratory,
University of Manitoba, WPG, MB, R3T 5V6, Canada}
\thanks{\llap{$^{\alpha}$\,}The research has been supported by University of Manitoba Graduate Fellowship and Gorden P. Osler Graduate Scholarship.}

\author[J.F. Peters]{J.F. Peters$^{\beta}$}
\email{James.Peters3@umanitoba.ca}
\address{\llap{$^{\beta}$\,}
Computational Intelligence Laboratory,
University of Manitoba, WPG, MB, R3T 5V6, Canada and
Department of Mathematics, Faculty of Arts and Sciences, Ad\.{i}yaman University, 02040 Ad\.{i}yaman, Turkey}
\thanks{\llap{$^{\beta}$\,}The research has been supported by the Natural Sciences \&
Engineering Research Council of Canada (NSERC) discovery grant 185986 
and Instituto Nazionale di Alta Matematica (INdAM) Francesco Severi, Gruppo Nazionale per le Strutture Algebriche, Geometriche e Loro Applicazioni grant 9 920160 000362, n.prot U 2016/000036.}

\subjclass[2010]{Primary 54E05 (Proximity); Secondary 68U05 (Computational Geometry)}

\date{}

\dedicatory{Ju. M. Smirnov and S.A. Naimpally}

\begin{abstract}
This paper introduces an order proximity on a collection of objects induced by a partial order using the Smirnov closeness measure on a Sz\'{a}z relator space.   A Sz\'{a}z relator is a nonempty family of relations defined on a nonvoid set $K$.   The Smirnov closeness measure provides a straightforward means of assembling partial ordered of pairwise close sets.  In its original form, Ju. M. Smirnov closeness measure $\delta(A,B) = 0$ for a pair of nonempty sets $A,B$ with nonvoid intersection and $\delta(A,B) = 1$ for non-close sets.  A main result in this paper is that the graph obtained by the proximity is equivalent to the Hasse diagram of the order relation that induces it.   This paper also includes an application of order proximity in detecting sequences of video frames that have order proximity.
\end{abstract}

\maketitle
\section{Introduction}\label{sec:intro}
This paper introduces proximities in terms of order relations on sets of objects with proximity induced by the order.
Axioms for the closeness (proximity) of nonempty sets have been given in terms of set intersections\cite{Cech1966}\cite{Lodato1964}\cite{Lodato1966}. Different proximities consider different associated sets such as set closures\cite{Wallman1938}, set interiors\cite{peters2015strongly} and set descriptions\cite{Peters2016CP}.
   
Traditionally, the closeness (proximity) of nonempty sets is viewed in terms of asymptotically close sets or those sets that have points in common.   With descriptive proximity, closeness of non-disjoint as well as disjoint nonempty sets occurs in cases where sets have matching descriptions.   A description of a set is a feature vector whose components are feature values that quantify the characteristics of a set.   Instead of the usual forms of proximity, the paper considers proximities relative to a partial ordering of elements of a set or collections of elements of sets or collections of nonempty sets or collections of cells in a cell complex or descriptions of collections of nonempty sets using a combination of a Sz\'{a}z relator~\cite{Szaz1987,Szaz2000,Szaz2014} on a nonempty set and the Smirnov closeness measure~\cite{Smirnov1952,Smirnov1952a}.  The main result in this paper states the equivalence of graphs obtained by proximity to the Hasse diagrams of order relations from which they are induced(see Theorem~\ref{thm:proxhassec}).


\section{Preliminaries}\label{sec:prelim}
Recall that a \emph{binary relation} is a subset of the Cartesian product,$X \times Y$. Then,
\begin{definition}\label{def:porder}
	A \emph{partial order} is a binary relation $\leq$, over a set $X$ satisfying for any $a,b,c \in X$:
	\begin{compactenum}[1$^o$]
		\item \textbf{Reflexivity:} $a \leq a$
		\item \textbf{Antisymmetry:} if $a \leq b$ and $b \leq a$, then $a=b$
		\item \textbf{Transitivity:} if $a \leq b$ and $b \leq c$, then $a \leq c$  
	\end{compactenum}
\end{definition}
Let us now define a total order.
\begin{definition}\label{def:torder}
	If a partial order $\leq$ also satisfies,
	\begin{description}
		\item[4$^o$] \textbf{Connex property or Totality:} either $a \leq b$ or $b \leq a$
	\end{description}
	then, $\leq$ is termed a \emph{total order}. Hence, the total order is a special case of a partial order.
\end{definition}	 
Let us now consider a pair $(X, \leq)$, where $X$ is a set and $\leq$ is a partial(total) order, then $(X,\leq)$ is a \emph{partially(totally) ordered set}. 
\begin{figure}
	\centering
	\begin{subfigure}[Total order on $\mathbb{Z}$]
		{\includegraphics[width=1.3in, bb=130 480 310 670,clip]{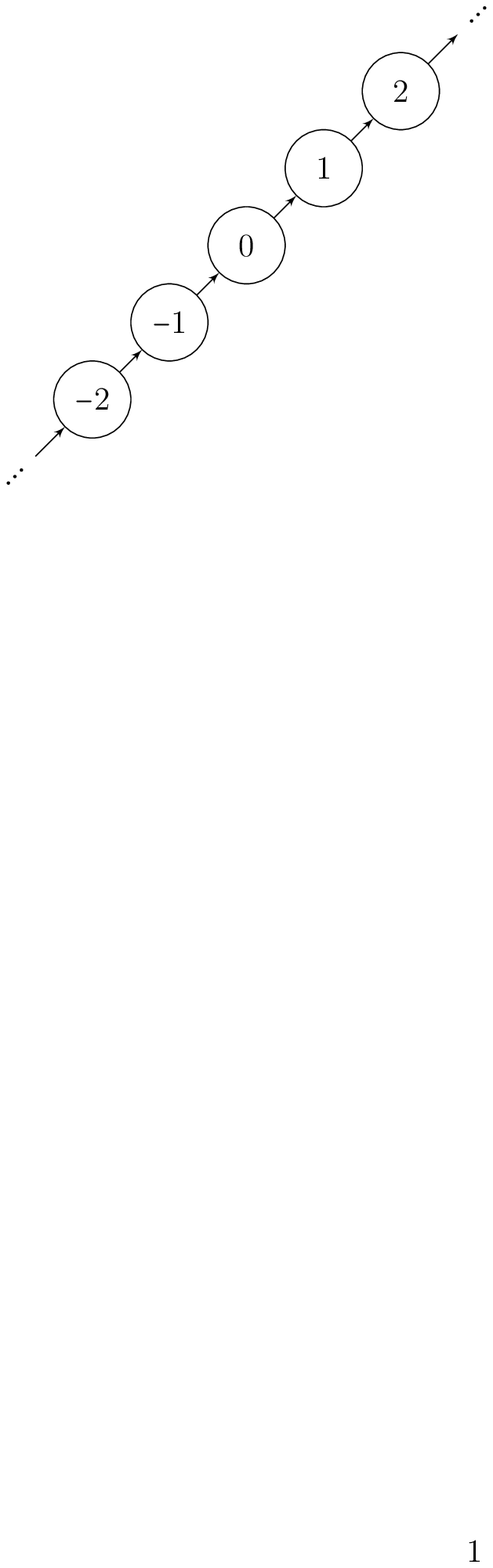}
			\label{subfig:integers}}
	\end{subfigure}
	\begin{subfigure}[Partial order on a power set]
		{\includegraphics[width=1.3in, bb=130 550 250 670,clip]{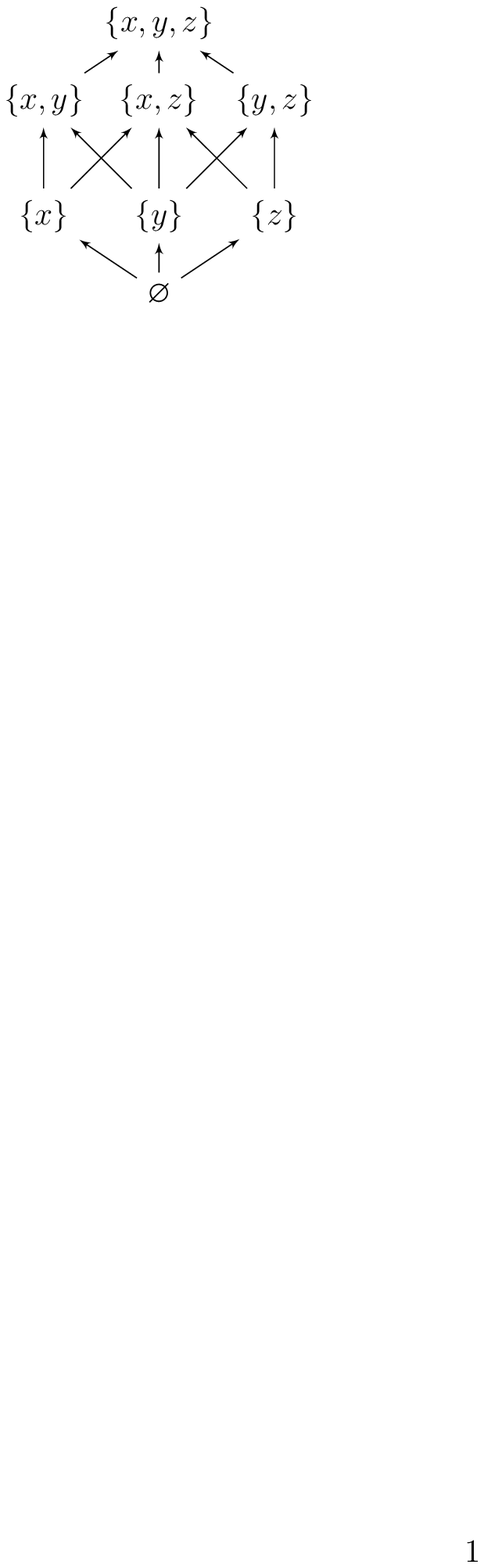}
			\label{subfig:subsetinc}}
	\end{subfigure}
	\begin{subfigure}[Cyclic order]
		{\includegraphics[width=1.3in, bb=100 550 230 670,clip]{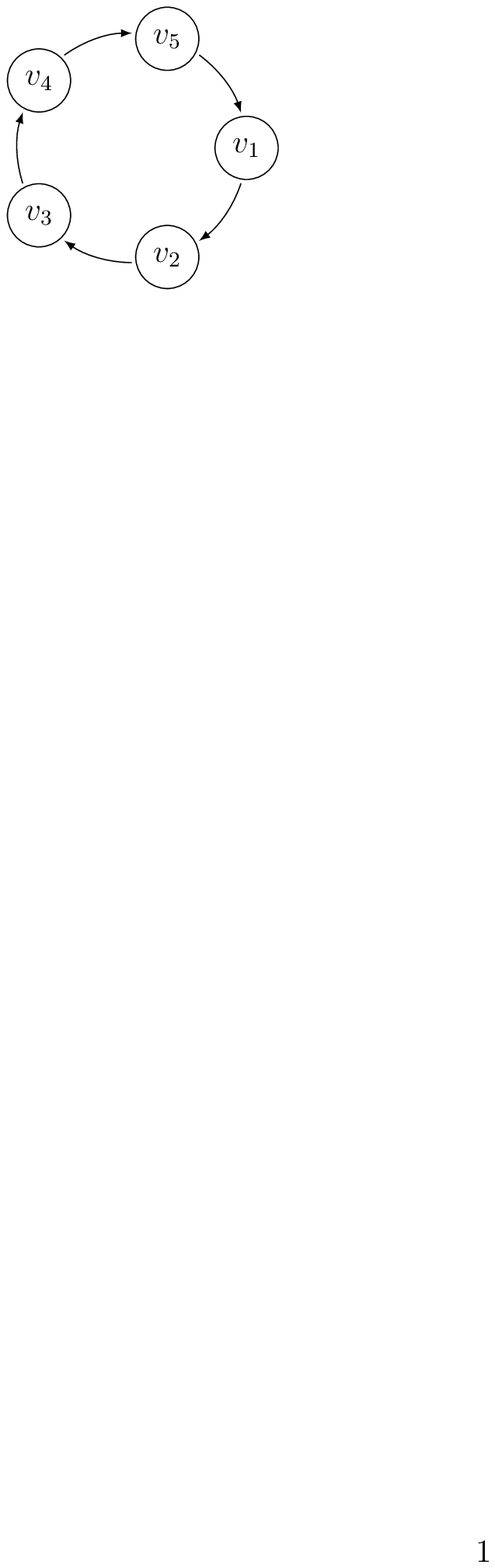}
			\label{subfig:cycle}}
	\end{subfigure}
	\caption{This figure shows the Hasse diagram of ordered sets with different types of order. Fig.~\ref{subfig:integers} represents a total order on $\mathbb{Z}$. Fig.~\ref{subfig:subsetinc} is a partially ordered set formed by the power set of $\{x,y,z\}$ under set inclusion $\subseteq$. Fig.~\ref{subfig:cycle} is the cyclic order on the set $\{v_1,v_2,v_3,v_4,v_5\}$.}
\end{figure}
\begin{example}\label{ex:example1}
	Consider the set of integers $\mathbb{Z}=\{\cdots,-2,-1,0,1,2,\cdots\}$ and the relation $\leq$(less than or equal to), then $(\mathbb{Z},\leq)$ is a totally ordered set. We can write $\cdots \leq -1 \leq0 \leq1<\cdots$ and for all $a,b \in \mathbb{Z}$ it is clear that either $a \leq b$ or $b \leq a$.

	Let us now consider a set $X=\{a,b,c\}$ and its power set $2^X=\{\emptyset,\{a\},\{b\},\{c\},\{a,\linebreak b\},\{a,c\},\{b,c\},\{a,b,c\}\}$. The pair $(2^X,\subseteq)$ is a poset as neither $\{a\} \not\subseteq \{b\}$, nor $\{b\} \not\subseteq \{a\}$. This is true for any of the singletons $\{a\},\{b\},\{c\}$ taken pairwise.    \qquad \eot
\end{example}

Let $(X,\leq)$ be a set with an order, consider $X$ as the vertex set and for each $a \leq b$ there is a directed edge $a \rightarrow b$. This graph represented as $\mathop{\mathcal{O}}\limits_\leq(X)$. The transiticve reduction of this graph is called the \emph{Hasse diagram} and is represented as $\mathop{\mathcal{H}s}\limits_\leq(X)$. A \emph{transitive reduction} of a directed graph $D_1$ is another directed graph $D_2$ with the same vertex set but a reduced edge set, such that if there is a path between vertices $v_1$ and $v_2$ in $D_1$, then there is also such a path in $D_2$.
\begin{example}\label{ex:example2}
	Let us consider the Hasse diagrams for the ordered sets considered in example \ref{ex:example1}. It can be seen that the operation $\leq$ on $\mathbb{Z}$ is a total order the Hasse diagram,Fig.~\ref{subfig:integers}, is a path graph(similar to a line in which each node except the terminating node is connected to two other nodes). That is why a total order is also called a linear order. 
	
	Now let us look at the Hasse diagram for the $(2^X,\subseteq)$, where $X=\{x,y,z\}$ shown in Fg.~\ref{subfig:subsetinc}. Comparing this diagram with the diagram for total order in Fig.~\ref{subfig:integers}, the difference between partial and total order become evident. In total order we can compare any two elements as there is a path between them. Whereas for the partial order as shown in Fig.~\ref{subfig:subsetinc} there is no path between $\{x\}$ and $\{y\}$. This is also true for all the singletons pairwise and all the subsets of size $2$ such as $\{x,y\}$. 
	\qquad \eot
\end{example}

\emph{Ternary relaiton} defined for three sets $X,Y,Z$, is the subset of $X \times Y \times Z$ and is an extension of the binary relation. Using this notion,
\begin{definition}\label{def:corder}
	 Define a ternary relation $\gamma$, that yields triples $[a,b,c]$ such that if one proceeds from $a \rightarrow c$, one has to pass through $b$. For $a,b,c,d \in X$, if this ternary relation $\gamma$ satisfies:
	\begin{compactenum}[1$^o$]
		\item \textbf{Cyclicity:} if $[a,b,c]$, then $[b,c,a]$
		\item \textbf{Antisymmetry:} if $[a,b,c]$ then not $[c,b,a]$
		\item \textbf{Transitivity:} if $[a,b,c]$ and $[a,c,d]$, then $[a,b,d]$
		\item \textbf{Totality:} if $a,b,c$ are distinct then either,$[a,b,c]$ or $[c,b,a]$
	\end{compactenum}      
	then it is called a \emph{cyclic order}.
\end{definition}
  If the \emph{totality condition} is not satisfied we get a \emph{partial cyclic order}, but in this paper we will restrict ourselves to the total cyclic order.
\begin{example}\label{ex:example3}
	Let us explain the triples $[a,b,c]$ yielded by the ternary relation $\gamma$. It means that the points are ordered so that in order to move from $a$ to $c$ one has to pass through $b$. Thus, the order is defined using three elements of the set rather than two as in the total/partial orders. It can be seen that if we have a cycle represented as $v_1\rightarrow v_2\rightarrow v_3\rightarrow v_4\rightarrow v_5\rightarrow v_1$, then any subset of three elements in this sequence(as long as we move in the same direction) satisfies the properties of cyclic order, defined above. To illustrate the choices we have $[v_4,v_5,v_1]$ and $[v_5,v_2,v_4]$ are valid choices but $[v_4,v_2,v_5]$ and $[v_1,v_5,v_4]$ are not. It can be seen that the Hasse diagram is of this particular order is a cyclic graph, and hence the name cyclic order. In the Hasse diagram we ignore all the ordered sets like $[v_2,v_4,v_1]$ as it is a combination of $[v_2,v_3,v_4]$ and $[v_4,v_5,v_1]$.
\end{example}
\begin{figure}
	\centering
	\begin{subfigure}[Proximity space]
		{\includegraphics[width=1.3in, bb=80 450 270 560,clip]{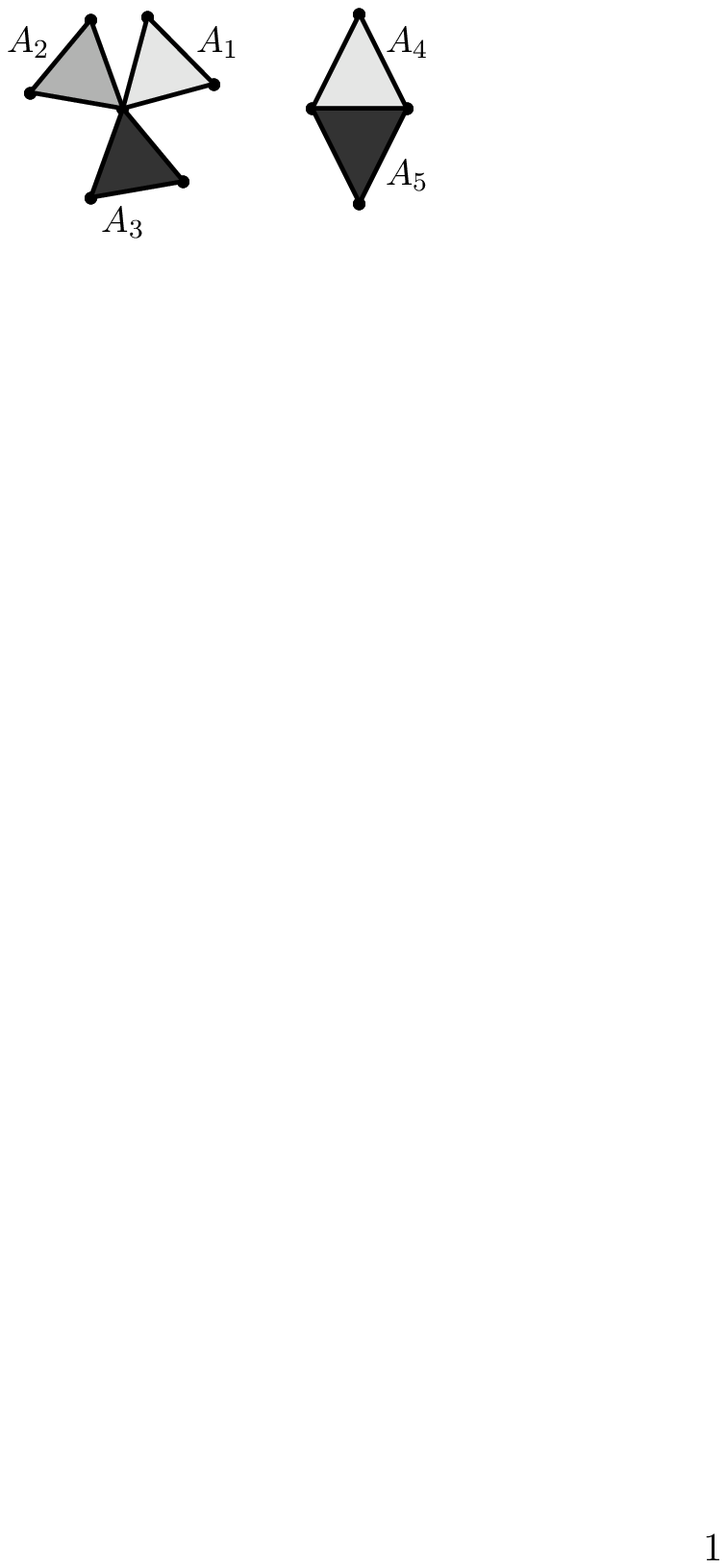}
			\label{subfig:proximity}}
	\end{subfigure}
	\begin{subfigure}[Spatial proximity graph]
		{\includegraphics[width=1.3in, bb=100 580 200 700,clip]{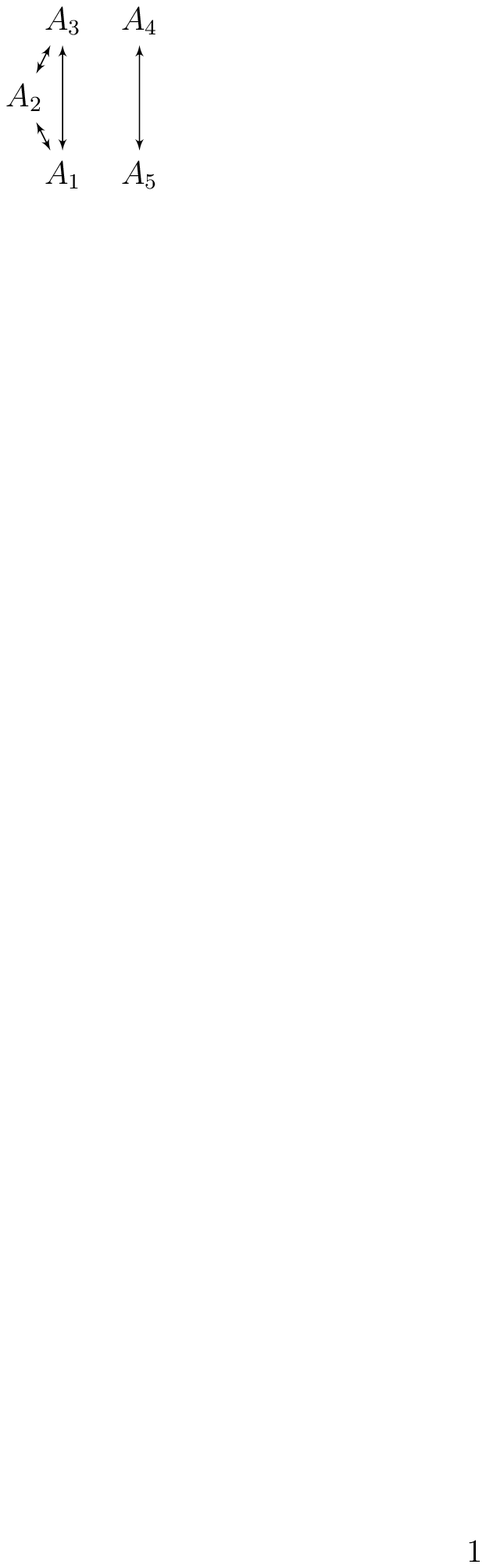}
			\label{subfig:Lodato}}
	\end{subfigure}
	\begin{subfigure}[Descriptive proximity graph]
		{\includegraphics[width=1.3in, bb=100 580 200 700,clip]{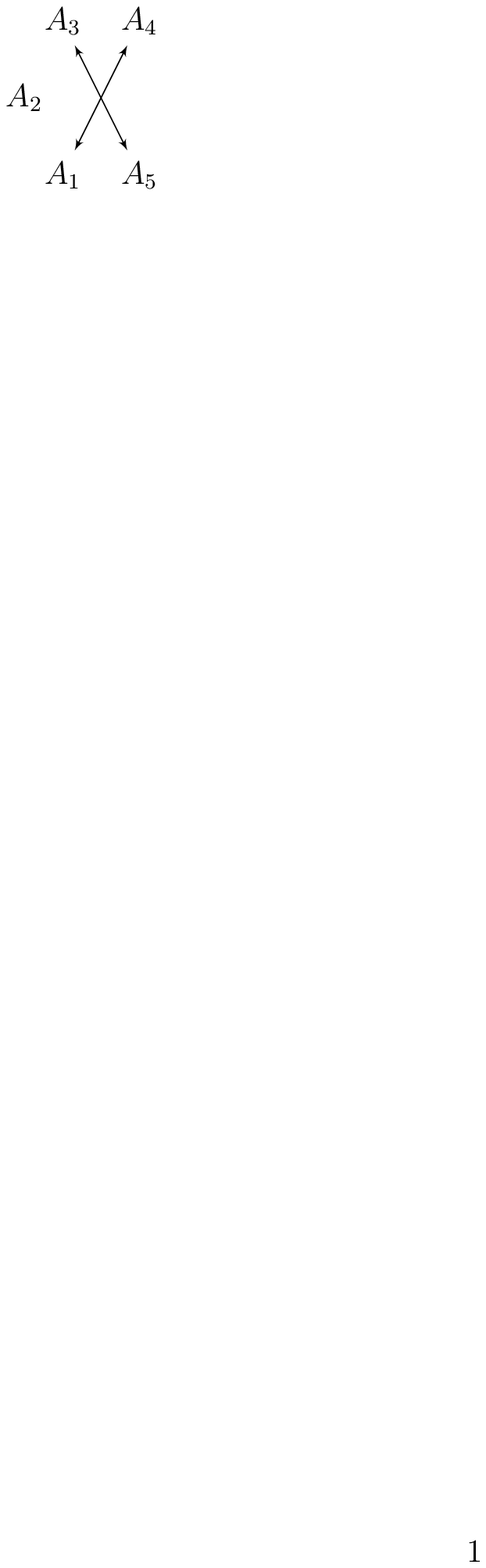}
			\label{subfig:desLodato}}
	\end{subfigure}
	\caption{This figure illustrates the concept of proximity spaces. Fig.~\ref{subfig:proximity} is the space $X$ that has been equipped by proximity relations. Fig.~\ref{subfig:Lodato} is the graph obtained by considering the spatial Lodato proximity $\near$. Fig.~\ref{subfig:desLodato} is the graph obtained using the descriptive Lodato proximity $\dnear$.} 
\end{figure}

Proximity is used to study whether sets in a space are near or far. It can be defined as a binary realtion on a space that yields pairs of subsets that are near each other. We will use the Smirnov functional notation for proximity i.e. $\near(A,B)=0$ if $A$ and $B$ are near and $\near(A,B)=1$ if they are far. 

Different criteria for nearness yield varying proximities. Suppose $A,B \subset X$, $A \cap B \Rightarrow \near(A,B)=0$ is the centeral axiom of C\u{e}ch\cite{Cech1966} and Lodato spatial\cite{Lodato1964},\cite{Lodato1966} proximities. Wallman proximity\cite{Wallman1938} uses $\near(A,B)=0 \Leftrightarrow cl\,A \cap cl\, B \neq \emptyset$, where $cl\, A$ is the closure of $A$. Strong proximity\cite{peters2015strongly} uses, $int\, A \cap int\, B \neq \emptyset \Rightarrow \sn(A,B)=0$, where $int\, A$ is the interior of $A$. The idea of a proximity can be extended to near sets using the notion of a probe function, $\phi:2^X \rightarrow \mathbb{R}^n$, that assigns a feature vector to subsets of the space. We generalize the classical intersection as:
\begin{align*}
A \dcap B= \{x\in A \cup B: \phi(x) \in \phi(A)\,or\,\phi(x) \in \phi(B)\}
\end{align*} 
The descriptive Lodato proximity\cite{Peters2016CP} uses $A \dcap B \neq \emptyset \Rightarrow \dnear(A,B)=0$ and the descriptive strong proximity uses $int\,A \dcap int\,B \neq \emptyset \Rightarrow \snd(A,B)=0$.
We can also get a proximity graph by treating the subsets of a space as the vertices and drawing an edge between $A$ and $B$ if $\near(A,B)=0$.
\begin{example}\label{ex:example4}
	Let us consider the space $X$ with the family of subsets $\{A_1,A_2,A_3,A_4,\linebreak A_5\}$ as shown in Fig.~\ref{subfig:proximity}. It can be seen that if we consider the spatial Lodato proximity then $\{A_1,A_2,A_3\}$ are pairwise proximal as they share a common point. $\near(A_4,A_5)=0$ as they share a common edge. Now, before we move on to drawing the proximity graph for spatial Lodato $\near$, it must be noted that all the proximity relations that we have considered are symmetric i.e. $\near(A,B)=0 \Leftrightarrow \near(B,A)=0$. This is different from the order relations that are antisymmetric. Hence, the proximity diagram for spatial Lodato shown in Fig.~\ref{subfig:Lodato} is an undirected graph. Now let us consider the descriptive Lodato proximity $\dnear$. We can see that $\dnear(A_1,A_4)=0$ and $\dnear(A_3,A_5)=0$ as their interior has the same color. The proximity diagram in this case, as shown in Fig.~\ref{subfig:desLodato}, is also undirected as it is a symmetric relation. 
	\qquad \eot
\end{example}
\section{Order Induced Proximities}\label{sec:results}
Previous works define proximity using a set of axioms. In this work we use different order relations to induce a proximity relation. We have introduced the notion of an ordered set which is a pair $(X,\leq)$, where $X$ is a set and $\leq$ is an order relation. We generalize this structure using the notion of a relator space $(X,\mathcal{R})$, $X$ is a set and $\mathcal{R}$ is a family of relations.
We begin by studying the proximity induced by a partial order defined in def.~\ref{def:porder}.
\begin{definition}\label{def:poprox}
	Let there be a relator space $(X,\{\leq\})$, where $\leq$ is a partial order then we define for $a,b \in X$
	\begin{align*}
	 	(a \leq b) \wedge(\not\exists x \in X \setminus \{a,b\}\; s.t.\;a\leq x \leq b) \Leftrightarrow\; \ponear{\gamma}(a,b)=0 \\
	 	(a \leq b) \wedge (\exists x \in X \setminus \{a,b\}\; s.t.\;a\leq x \leq b) \Leftrightarrow\; \ponear{\gamma}(a,b)=1
	\end{align*}
	Here, $\ponear{\leq}$ is the proximity induced by the partial order $\leq$.
\end{definition}
We formulate the following theorem regarding the properties of the proximity induced by the partial order $\ponear{\leq}$.
\begin{theorem}\label{thm:ponearprop}
	Let $(X,\{\leq\})$ be a relator space where $X$ is a set and $\leq$ is a partial order. Let $a,b,c \in X$ be three elements in $X$. Then, the proximity induced on $X$ by the partial order $\leq$ represented by $\ponear{\leq}$ satisfies the following conditions:
	\begin{compactenum}[1$^o$]
		\item \textbf{Reflexivity:} $\ponear{\leq}(a,a)=0$
		\item \textbf{Antisymmetry:} if $\ponear{\leq}(a,b)=0$ and $\ponear{\leq}(b,a)=0$ then $a=b$
		\item \textbf{Antitransitivity:} if $\ponear{\leq}(a,b)=0$ and $\ponear{\leq}(b,c)=0$ then $\ponear{\leq}(a,c)=1$
	\end{compactenum} 
\end{theorem}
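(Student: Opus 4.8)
The plan is to verify each of the three conditions directly from Definition~\ref{def:poprox}, in each case reducing it to the corresponding axiom of the underlying partial order in Definition~\ref{def:porder}. There is no heavy machinery needed; the whole proof is a matter of unpacking the functional notation carefully and keeping track of the degenerate cases.

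First I would dispose of \textbf{reflexivity} and \textbf{antisymmetry}, which are essentially immediate. For reflexivity, $a \leq a$ holds by reflexivity of $\leq$, so the only thing to check is the ``no intermediate element'' clause: if there were $x \in X \setminus \{a\}$ with $a \leq x \leq a$, then antisymmetry of $\leq$ would force $x = a$, contradicting $x \neq a$; hence $\ponear{\leq}(a,a) = 0$. For antisymmetry, the hypotheses $\ponear{\leq}(a,b) = 0$ and $\ponear{\leq}(b,a) = 0$ unpack via the first line of Definition~\ref{def:poprox} to give in particular $a \leq b$ and $b \leq a$, and antisymmetry of the partial order yields $a = b$.

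Next I would handle \textbf{antitransitivity}. Here I would first argue that $a,b,c$ may be assumed pairwise distinct: if $a = b$ then $\ponear{\leq}(b,c) = \ponear{\leq}(a,c) = 0$, which is exactly the negation of the desired conclusion, and similarly $b = c$ makes the two hypotheses contradict each other; while $a = c$ together with $\ponear{\leq}(a,b) = \ponear{\leq}(b,a) = 0$ forces $a = b$ by the antisymmetry just proved, collapsing to the previous case. With $a,b,c$ distinct, the hypotheses give $a \leq b$ and $b \leq c$, so transitivity of $\leq$ gives $a \leq c$; moreover $b \in X \setminus \{a,c\}$ and $a \leq b \leq c$, so $b$ witnesses an intermediate element between $a$ and $c$. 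By the second line of Definition~\ref{def:poprox} this says precisely that $\ponear{\leq}(a,c) = 1$.

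The only place demanding any care — and the closest thing to an obstacle — is the degenerate-case bookkeeping in the antitransitivity step: the statement as written is false without distinctness of the three elements (reflexivity would then clash with antitransitivity), so one must either adopt the convention that the three elements are distinct or explicitly rule out the collisions as above. Once that is settled, each of the three clauses is a one-line consequence of the matching axiom of $\leq$, with the ``witness element'' $b$ doing the work in the antitransitivity case.
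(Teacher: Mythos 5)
Your proof is correct and follows essentially the same route as the paper's: each clause is obtained by unpacking Definition~\ref{def:poprox} and invoking the corresponding axiom of $\leq$ from Definition~\ref{def:porder}, with $b$ serving as the intermediate witness in the antitransitivity clause. Your explicit bookkeeping of the degenerate cases is in fact more careful than the paper, whose argument tacitly assumes $b \notin \{a,c\}$ so that $b$ qualifies as a witness in $X \setminus \{a,c\}$; the only quibble is your remark that $b=c$ makes the two hypotheses contradict each other --- they do not (both can hold, the second by reflexivity); rather the would-be conclusion $\ponear{\leq}(a,c)=\ponear{\leq}(a,b)=1$ clashes with the hypothesis $\ponear{\leq}(a,b)=0$ --- but this is immaterial to your main argument, which correctly assumes distinctness.
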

\begin{proof}
	\begin{compactenum}[$1^o$]
		\item It follows directly from the reflexivity condition of the partial order as defined in def.~\ref{def:porder} stating that $a \leq a$ and the antisymmetry condition which states that if $a \leq b$ and $b \leq a$, then $a=b$. Hence, there is no distinct $x \in X$ such that $a \leq x \leq a$, the only such $x$ is $a$. Thus, from def.~\ref{def:poprox} we get $\ponear{\leq}(a,a)=0$.
		\item From def.~\ref{def:poprox} it can be seen that $\ponear{\leq}(a,b)=0$ means that $a \leq b$ and $\ponear{\leq}(b,a)=0$ means that $b \leq a$. Now invoking the antisymmetry property of the partial order as defined in def.~\ref{def:porder}. Thus, if $a \leq b$ and $b \leq a$ then $a=b$.
		\item From def.~\ref{def:poprox}, $\ponear{\leq}(a,b)=0$ states $a \leq b$ and $\ponear{b,c}=0$ is equivalaent to $b \leq c$. This gives us $a \leq b \leq c$. From def.~\ref{def:poprox} we an see that $\ponear{\leq}(a,c)=1$ as there exists an $x\ in X$ such that $a \leq x=b \leq c$.		
	\end{compactenum}
\end{proof}
Given a realtor space $(X,\{\leq\})$ we draw a graph over the vertex set $X$ such that there exists an edge between $a \rightarrow b$ if $a \leq b$. This graph is represented as $\mathop{\check{\mathcal{O}}}\limits_\leq(X)$. We can draw another graph over the vertex set $X$ such that an edge $a \rightarrow b$ exists if $\ponear{\leq}(a,b)=0$. This graph is represented as $\mathop{\check{\mathcal{P}}}\limits_\leq(X)$.
\begin{lemma}\label{lm:reduction}
	Let $(X,\{\leq\})$ be a relator space where $X$ is a set and $\leq$ be a partial order. Then if there exists a path between two vertices in the graph $\mathop{\check{\mathcal{O}}}\limits_\leq(X)$, then there also exists a path between these vertices in $\mathop{\check{\mathcal{P}}}\limits_\leq(X)$.
\end{lemma}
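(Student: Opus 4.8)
The plan is to show that any edge $a \to b$ of $\mathop{\check{\mathcal{O}}}\limits_\leq(X)$ is ``covered'' by a path in $\mathop{\check{\mathcal{P}}}\limits_\leq(X)$, and then to lift this from single edges to arbitrary paths. Since a path in $\mathop{\check{\mathcal{O}}}\limits_\leq(X)$ is a concatenation of such edges, it suffices to prove the statement for a single edge, i.e. to show: if $a \leq b$ then there is a directed path from $a$ to $b$ in $\mathop{\check{\mathcal{P}}}\limits_\leq(X)$.

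First I would handle the trivial case $a = b$: by Theorem~\ref{thm:ponearprop}(1$^o$), $\ponear{\leq}(a,a) = 0$, so there is a (length-zero, or self-loop) path. For $a \neq b$ with $a \leq b$, I would argue by considering the set $I(a,b) = \{x \in X : a \leq x \leq b\}$ of elements lying between $a$ and $b$. If $I(a,b) = \{a,b\}$, then by Definition~\ref{def:poprox} we have $\ponear{\leq}(a,b) = 0$ directly, giving the edge $a \to b$ in $\mathop{\check{\mathcal{P}}}\limits_\leq(X)$. Otherwise there is some intermediate $x$ with $a \leq x \leq b$ and $x \notin \{a,b\}$; here I would want to induct. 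The cleanest device is induction on $|I(a,b)|$ (assuming $X$, or at least each such interval, is finite — a hypothesis the paper seems to take implicitly, since Hasse diagrams and transitive reductions are being used): splitting at $x$ gives $a \leq x$ and $x \leq b$, and both intervals $I(a,x)$ and $I(x,b)$ are proper subsets of $I(a,b)$ (each omits at least one of $b$, $a$ respectively, plus $x$ sits in only one side's interior appropriately), so the induction hypothesis yields a path $a \leadsto x$ and a path $x \leadsto b$ in $\mathop{\check{\mathcal{P}}}\limits_\leq(X)$; concatenating them gives the desired path $a \leadsto b$.

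Then I would assemble the general claim: given a path $v_0 \to v_1 \to \cdots \to v_n$ in $\mathop{\check{\mathcal{O}}}\limits_\leq(X)$, each edge $v_{i} \to v_{i+1}$ means $v_i \leq v_{i+1}$, so by the single-edge case there is a path from $v_i$ to $v_{i+1}$ in $\mathop{\check{\mathcal{P}}}\limits_\leq(X)$; concatenating these $n$ paths produces a path from $v_0$ to $v_n$ in $\mathop{\check{\mathcal{P}}}\limits_\leq(X)$, which is exactly what is required.

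The main obstacle is the well-foundedness needed to run the induction: I am implicitly using that between any two comparable elements there are only finitely many elements, so that the interval $I(a,b)$ has a well-defined finite size and the recursion terminates. If the paper wants the statement in full generality for arbitrary posets this step needs care (one would need a descending-chain-type condition, or to restrict to locally finite posets); I would flag this and, in line with the paper's evident setting of finite Hasse diagrams, assume finiteness of the relevant intervals. A secondary, purely bookkeeping point is to make sure the strict decrease $|I(a,x)| < |I(a,b)|$ and $|I(x,b)| < |I(a,b)|$ is justified cleanly — it follows because $b \notin I(a,x)$ and $a \notin I(x,b)$ while $x$ belongs to $I(a,b)$, so neither sub-interval can equal the whole.
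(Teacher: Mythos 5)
Your proof is correct and follows essentially the same route as the paper's: both reduce the relation $a \leq b$ to an unrefinable chain $a \leq d_1 \leq \cdots \leq d_n \leq b$ into which no further element of $X$ can be inserted, so that each consecutive pair satisfies $\ponear{\leq}(\cdot,\cdot)=0$ and the chain becomes a path in $\mathop{\check{\mathcal{P}}}\limits_\leq(X)$. The only difference is one of rigor: you construct the chain by induction on the size of the interval $I(a,b)$ and explicitly flag the local-finiteness hypothesis this needs, whereas the paper simply asserts the existence of such a finite maximal chain ``by transitivity'' --- your caveat applies equally to the paper's argument, so nothing in your proposal is a gap beyond what the paper itself leaves implicit.
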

\begin{proof}
	Suppose there is a path between two vertices $a$ and $b$ in $\mathop{\check{\mathcal{O}}}\limits_\leq(X)$. This means that for $a,b \in X \;a\leq b$. Transitivity of the partial order(def.~\ref{def:porder}) dictates that there can be two cases. Either there exists no $x \in X$ such that $a \leq x \leq b$ or there exists a family of elements $\{d_i\} \in X$ such that $a \leq d_1 \leq  \cdots \leq  d_n \leq b$ and no element $x \in X$ can be inserted at any location in this sequence. For the case $\not \exists x \in X\;s.t. \; a \leq x\leq b$, from def.~\ref{def:poprox} we have $\ponear{\leq}(a,b)=0$. Hence, there exists an edge $a \rightarrow b$ in the graph $\mathop{\check{\mathcal{P}}}\limits_\leq(X)$. Now for the case in which $a \leq d_1 \leq \cdots \leq d_n \leq b$ such that no element can be inserted in this sequnce. From def.~\ref{def:poprox} we can write $\ponear{\leq}(a,d_1)=\ponear{\leq}(d_1,d_2)=\cdots=\ponear{\leq}(d_{n-1},d_n)=\ponear{\leq}(d_n,b)=0$. Hence, there exists a sequence of edges $a \rightarrow d_1 \rightarrow d_2 \rightarrow \cdots \rightarrow d_n \rightarrow b$ in the graph $\mathop{\check{\mathcal{P}}}\limits_\leq(X)$ constituting a path between $a$ and $b$.
\end{proof}
\begin{theorem}\label{thm:proxhassepo}
	Let $(X,\{\leq\})$ be a realtor space where $X$ is a set, $\leq$ is a partial order and $\ponear{\leq}$ is the induced proximity as defined by def.~\ref{def:poprox}. Then, $\mathop{\check{\mathcal{P}}}\limits_\leq(X)$ is equivalent to $\mathop{\mathcal{H}s}\limits_\leq(X)$. 
\end{theorem}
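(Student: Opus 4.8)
The plan is to show that the two directed graphs $\mathop{\check{\mathcal{P}}}\limits_\leq(X)$ and $\mathop{\mathcal{H}s}\limits_\leq(X)$ have exactly the same edge set (they already share the vertex set $X$), which is the strongest possible notion of equivalence here. Recall that $\mathop{\mathcal{H}s}\limits_\leq(X)$ is by definition the transitive reduction of $\mathop{\mathcal{O}}\limits_\leq(X)$, the digraph with an edge $a \to b$ whenever $a \leq b$. The key observation is that, for a partial order, the transitive reduction is obtained precisely by deleting the loops and every edge $a \to b$ for which there is an intermediate element $x \in X \setminus \{a,b\}$ with $a \leq x \leq b$; what remains are the \emph{covering} pairs. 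But Definition~\ref{def:poprox} says exactly that $\ponear{\leq}(a,b) = 0$ iff $a \leq b$ and no such intermediate $x$ exists, i.e. iff $(a,b)$ is a covering pair (with $a \neq b$). So the two edge sets coincide essentially by unwinding the definitions.

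Concretely, I would argue by double inclusion on edges. First, suppose $a \to b$ is an edge of $\mathop{\check{\mathcal{P}}}\limits_\leq(X)$. Then $\ponear{\leq}(a,b) = 0$, so $a \leq b$ and there is no $x \in X\setminus\{a,b\}$ with $a \leq x \leq b$; hence $a \to b$ is an edge of $\mathop{\mathcal{O}}\limits_\leq(X)$ that is not a shortcut of a longer directed path, so it survives in the transitive reduction $\mathop{\mathcal{H}s}\limits_\leq(X)$. Conversely, if $a \to b$ is an edge of $\mathop{\mathcal{H}s}\limits_\leq(X)$, then $a \leq b$ and $a \neq b$; if there were an intermediate $x$ with $a < x < b$ then the edge $a \to b$ could be removed while still preserving reachability from $a$ to $b$ (via $a \to \dots \to x \to \dots \to b$), contradicting minimality of the transitive reduction — so no intermediate exists and $\ponear{\leq}(a,b) = 0$, giving the edge in $\mathop{\check{\mathcal{P}}}\limits_\leq(X)$. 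Reachability is preserved in both directions by Lemma~\ref{lm:reduction} together with the trivial inclusion of $\mathop{\check{\mathcal{P}}}\limits_\leq(X)$'s edges among reachability-witnessing paths of $\mathop{\check{\mathcal{O}}}\limits_\leq(X)$, so both graphs determine the same reachability relation as $\mathop{\mathcal{O}}\limits_\leq(X)$, namely $\leq$ itself.

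The main subtlety — and the step I would be most careful about — is the justification that the transitive reduction of a digraph coming from a partial order is genuinely unique and equals the covering relation. For general digraphs the transitive reduction need not be unique, but for a finite poset (or more generally a DAG) it is, and it is precisely the Hasse/covering digraph; I would invoke this as a standard fact (or give the short argument that any edge $a \to b$ with an intermediate is redundant, and any covering edge is forced, since no directed path of length $\geq 2$ from $a$ to $b$ can exist without producing an intermediate element by transitivity). A secondary point worth stating explicitly is that antisymmetry of $\leq$ rules out nontrivial directed cycles, so ``path'' in Lemma~\ref{lm:reduction} can be taken to mean a genuinely monotone chain; this is what makes the reachability relation of each graph coincide with $\leq$ rather than something coarser. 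Once these facts are in place, the theorem follows immediately from Definition~\ref{def:poprox} and Lemma~\ref{lm:reduction}, with no further computation required.
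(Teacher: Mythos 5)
Your argument is correct, and it is in fact tighter than the paper's own proof, but it follows a genuinely different route. The paper argues at the level of reachability: it recalls that $\mathop{\mathcal{H}s}\limits_\leq(X)$ is a transitive reduction of $\mathop{\check{\mathcal{O}}}\limits_\leq(X)$ and then invokes Lemma~\ref{lm:reduction} to say that $\mathop{\check{\mathcal{P}}}\limits_\leq(X)$ also preserves every path of $\mathop{\check{\mathcal{O}}}\limits_\leq(X)$, implicitly treating ``being a transitive reduction'' as enough to identify the two graphs; it never checks minimality of the edge set of $\mathop{\check{\mathcal{P}}}\limits_\leq(X)$ nor the uniqueness of the reduction, so the identification is left tacit. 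You instead work at the level of individual edges: unwinding Definition~\ref{def:poprox} shows that $\ponear{\leq}(a,b)=0$ exactly characterizes the covering pairs of $(X,\leq)$, and the Hasse diagram is by the standard characterization exactly the covering digraph, so the two edge sets coincide outright. This buys a complete double-inclusion proof that does not lean on an unstated uniqueness claim, and it makes explicit the two points the paper glosses over: that antisymmetry excludes directed cycles, and that uniqueness of the transitive reduction (equivalently, the fact that it equals the covering relation) is a property of finite posets/DAGs rather than of arbitrary digraphs. The one caveat you should flag, which applies equally to the paper, is that this covering-relation characterization can fail for infinite posets (e.g.\ a dense order has an empty covering relation), so strictly speaking both proofs need a finiteness or local-finiteness hypothesis that the theorem statement omits.
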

\begin{proof}
	We know that $\mathop{\mathcal{H}s}\limits_\leq(X)$ is a transitive reduction of $\mathop{\check{\mathcal{O}}}\limits_\leq(X)$. This means that for every path between two vertices in $\mathop{\check{\mathcal{O}}}\limits_\leq(X)$, there exists a path in $\mathop{\mathcal{H}s}\limits_\leq(X)$. From lemma~\ref{lm:reduction} we can see that $\mathop{\check{\mathcal{P}}}\limits_\leq(X)$ has a path between every pair of vertices that are connected by a path in $\mathop{\check{\mathcal{O}}}\limits_\leq(X)$.
\end{proof}
\begin{example}\label{ex:example5}
	Consider $X=\{a,b,c\}$ and its power set $2^X=\{\{a\},\{b\},\{c\},\{a,b\},\{a,c\} \linebreak ,\{b,c\},\{a,b,c\}\}$, as in example~\ref{ex:example1}. It has been discussed that $(2^X,\subseteq)$ is a partially ordered set and can be visually represented as the Hasse diagram shown in Fig.~\ref{subfig:subsetinc}. Let us see how the proximity relations $\ponear{\subseteq}$ are induced by def.~\ref{def:poprox}. It can be seen that  as there is no $A \in 2^X$ such that $\emptyset \subseteq A 
	\subseteq \{x\}$, we have $\ponear{\subseteq}(\emptyset, \{x\})=0$. Whereas $\emptyset \subseteq \{x\} \subseteq \{x,y\}$, hence $\ponear{\subseteq}(\emptyset, \{x,y\})=1$. Moreover, it can be seen that $\{x\} \not\subseteq \{y\}$ and $\{y\} \not\subseteq \{x\}$ thus the two elements $\{x\},\{y\}$ have no order relation between them. This is why $\subseteq$ is called a partial order. Folowing the same argument there is no induced proximity relation $\ponear{\subseteq}$ between $\{x\}$ and $\{y\}$ hence we cannot say whether they are far or near based on proximity induced by partial order $\subseteq$. It can be seen from Thm.~\ref{thm:proxhassepo} that the proximity graph $\mathop{\check{\mathcal{P}}}\limits_\leq(2^X)$ is the same as the Hasse diagram shown in Fig.~\ref{subfig:subsetinc}. 
	\qquad \eot 
\end{example}
Let us study the proximity induced by the total order defined in def.~\ref{def:torder}.
\begin{definition}\label{def:toprox}
	Let there be a relator space $(X,\{\leq\})$, where $\leq$ is a total order then we define for  $a,b \in X$
	\begin{align*}
		(a \leq b) \wedge (\not\exists x \in X \setminus \{a,b\}\; s.t.\;a\leq x \leq b) \Leftrightarrow\; \tonear{\gamma}(a,b)=0 \\
		(a \leq b) \wedge (\exists x \in X \setminus \{a,b\}\; s.t.\;a\leq x \leq b) \Leftrightarrow\; \tonear{\gamma}(a,b)=1
	\end{align*}
Here, $\tonear{\leq}$ is the proximity induced by the total order $\leq$.
\end{definition}
Let us formulate properties of the proximity induced by the total order $\tonear{\leq}$.
\begin{theorem}\label{thm:tonearprop}
		Let $(X,\{\leq\})$ be a relator space where $X$ is a set and $\leq$ is a total order. Let $a,b,c \in X$ be three elements in $X$. Then, the proximity induced on $X$ by the total order $\leq$ represented by $\tonear{\leq}$ satisfies the following conditions:
	\begin{compactenum}[$1^o$]
		\item \textbf{Reflexivity:} $\tonear{\leq}(a,a)=0$
		\item \textbf{Antisymmetry:} if $\tonear{\leq}(a,b)=0$ and $\tonear{\leq}(b,a)=0$ then $a=b$
		\item \textbf{Antitransitivity:} if $\tonear{\leq}(a,b)=0$ and $\tonear{\leq}(b,c)=0$ then $\tonear{\leq}(a,c)=1$
		\item \textbf{Totality:} either $\big( \exists A=\{a,d_1,\cdots,d_n,b\}  \subseteq X \; s.t. \; \mathop{\sum}\limits_{i=1}^{n+1}\tonear{\leq}(A_i,A_{i+1})=0 \big)$ or $\big( \exists B=\{b,e_1,\cdots,e_m,a\}  \subseteq X \; s.t. \; \mathop{\sum}\limits_{i=1}^{m+1}\tonear{\leq}(B_i,B_{i+1})=0 \big)$
	\end{compactenum} 
where $A_i$ is the $i-$th element of set $A$ and $m,n \in \mathbb{Z}^+$.
\end{theorem}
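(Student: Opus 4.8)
The plan is to split the four claims into two groups: properties $1^o$--$3^o$ are inherited wholesale from Theorem~\ref{thm:ponearprop}, and only the \textbf{Totality} clause $4^o$ carries genuinely new content.

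For $1^o$--$3^o$ I would first observe that, by Definition~\ref{def:torder}, a total order is in particular a partial order, and that Definition~\ref{def:toprox} assigns $\tonear{\leq}$ via exactly the clauses Definition~\ref{def:poprox} uses for $\ponear{\leq}$. Hence the arguments of Theorem~\ref{thm:ponearprop} transfer verbatim: reflexivity $\tonear{\leq}(a,a)=0$ follows from $a\leq a$ together with antisymmetry of $\leq$, which forbids a distinct intermediate element; antisymmetry of $\tonear{\leq}$ is obtained by unwinding $\tonear{\leq}(a,b)=0=\tonear{\leq}(b,a)$ to $a\leq b$ and $b\leq a$ and invoking antisymmetry of $\leq$; and antitransitivity follows because $\tonear{\leq}(a,b)=0$ and $\tonear{\leq}(b,c)=0$ yield the chain $a\leq b\leq c$ in which $b$ itself is a member of $X\setminus\{a,c\}$ lying between $a$ and $c$, so Definition~\ref{def:toprox} forces $\tonear{\leq}(a,c)=1$ (as in Theorem~\ref{thm:ponearprop}, this part is intended for distinct $a,b,c$, since $a=c$ would instead give $\tonear{\leq}(a,c)=0$ by $1^o$).

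For $4^o$ I would fix $a,b\in X$ (the claim being trivial when $a=b$). The connex property, condition $4^o$ of Definition~\ref{def:torder}, gives $a\leq b$ or $b\leq a$; by symmetry of the two cases assume $a\leq b$, so there is a path from $a$ to $b$ in $\mathop{\check{\mathcal{O}}}\limits_\leq(X)$. Now I would invoke Lemma~\ref{lm:reduction}, which applies since $\leq$ is a partial order: there is a path $a\rightarrow d_1\rightarrow\cdots\rightarrow d_n\rightarrow b$ in $\mathop{\check{\mathcal{P}}}\limits_\leq(X)$, and by the definition of that graph each of its edges records $\tonear{\leq}(a,d_1)=\tonear{\leq}(d_1,d_2)=\cdots=\tonear{\leq}(d_n,b)=0$. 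Setting $A=\{a,d_1,\dots,d_n,b\}$, the sum $\sum_{i=1}^{n+1}\tonear{\leq}(A_i,A_{i+1})$ is a sum of zeros and hence equals $0$, which is the first alternative in $4^o$; the case $b\leq a$ produces the set $B=\{b,e_1,\dots,e_m,a\}$ by the identical argument.

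The step carrying the real weight is the passage from the bare comparability $a\leq b$ to a \emph{finite} chain $a\rightarrow d_1\rightarrow\cdots\rightarrow d_n\rightarrow b$ of immediate-neighbour proximities, i.e. exactly the content of Lemma~\ref{lm:reduction}, which rests on the maximal chain between two comparable elements being finite. This is unproblematic for the discrete total orders in play here (for instance the order on $\mathbb{Z}$ of Example~\ref{ex:example1}), so I would simply record this as the standing hypothesis under which $4^o$ is stated rather than attempt to establish it in full generality.
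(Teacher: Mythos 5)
Your proposal is correct and takes essentially the same route as the paper: for $1^o$--$3^o$ your transfer from Theorem~\ref{thm:ponearprop} reproduces verbatim the arguments the paper simply repeats inline for $\tonear{\leq}$ (legitimate, since Definition~\ref{def:toprox} mirrors Definition~\ref{def:poprox} and a total order is a partial order), and for $4^o$ your appeal to the connex property together with Lemma~\ref{lm:reduction} is exactly the paper's own construction of a chain $a\leq d_1\leq\cdots\leq d_n\leq b$ read off through Definition~\ref{def:toprox}. The only point worth noting is that the finiteness/maximality assumption you flag as a standing hypothesis is also present, tacitly, in the paper's proof (``assuming that there exists no $x\in X$ that can be added to this chain at any point''), so your version is if anything slightly more careful, not different in substance.
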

\begin{proof}
	\begin{compactenum}[1$^o$]
		\item From reflexivity property of total order(def.~\ref{def:torder}) we get $a \leq a$. If we take into account the antisymmetry property of the total order(def.~\ref{def:torder}), which states that  if $a \leq b$ and $b \leq a$ then $a=b$. We conclude that $a \leq b \leq a$ dictates that $b=a$. From def.~\ref{def:toprox} we can say that as no distinct $x \in X$ such that $a \leq x \leq a$ then $\tonear{\leq}(a,a)=0$.
		\item From def.~\ref{def:toprox} we an establish that $\tonear{\leq}(a,b)=0$ means $a \leq b$ and $\tonear{\leq}(b,a)=0$ means $b \leq a$. Now from the antisymmetry property of the total order as defined in def.~\ref{def:torder}, w can conclude that if $a \leq b$ and $b \leq a$ then $a=b$.
		\item From def.~\ref{def:toprox} we can say that $\tonear{\leq}(a,b)=0$ means $a \leq b$ and $\tonear{\leq}(b,c)=0$. Thus $a \leq b \leq c$. Now from def.~\ref{def:toprox} we know that if there  exists $x\in X$ such that $a \leq x \leq b$ then $\tonear{\leq}(a,b)=1$. Hence, $\tonear{\leq}(a,c)=1$.
		\item From the totality property of the total order(def.~\ref{def:torder}) we can conclude that either $a \leq b$ or $b \leq a$. Using the transitive property of the total order(def.~\ref{def:torder}) we conclude that $a \leq b$ means that either there is no $x \in X$ such that $a \leq x \leq b$ in which case from def.~\ref{def:toprox} $\tonear{\leq}(a,b)=0$, or there exists a set of elements $\{d_1,\cdots,d_n\}\subseteq X$ such that $a \leq d_1 \leq \cdots \leq d_n \leq b$. Assuming that there exists no $x \in X$ that can be added to this chain at any point. In this case using the def.~\ref{def:toprox} we can conclude that $\tonear{\leq}(a,d_1)=\tonear{\leq}(d_1,d_2)=\cdots=\tonear{\leq}(d_{n-1},d_n)=\tonear{\leq}(d_n,b)=0$. This can be written as $\exists A=\{a,d_1,\cdots,d_n,b\}\subseteq X$ such that $\mathop{\sum}\limits_{i=1}^{n+1}\tonear{\leq}(A_i,A_{i+1})=0 $. Replicating this argument for $b \leq a$ we can obtain $\exists B=\{b,e_1,\cdots,e_m,a\} \subseteq X$ such that $\mathop{\sum}\limits_{i=1}^{m+1}\tonear{\leq}(B_i,B_{i+1})=0$.
	\end{compactenum}
\end{proof}
For the relator space $(X,\{\leq\})$ where $X$ is a set and $\leq$ is a total order then we have a graph with $X$ as the vertex set and an edge $a \rightarrow b$ for $a,b \in X$ such that $a \leq b$. This is represented as $\mathop{\bar{\mathcal{O}}}\limits_\leq(X)$. Another graph over the vertex set $X$ is obtained by adding an edge $a \rightarrow b$ if $\tonear{\leq}(a,b)=0$, and is represented as $\mathop{\bar{\mathcal{P}}}\limits_\leq(X)$. 
\begin{lemma}\label{lm:toreduction}
	Let $(X,\{\leq \})$ be arelator space where $X$ is a space and $\leq$ a total order. Then, if there exists a path between two vertices in the graph $\mathop{\bar{\mathcal{O}}}\limits_\leq(X)$, then there also exists a path between these vertices in $\mathop{\bar{\mathcal{P}}}\limits_\leq(X)$.
\end{lemma}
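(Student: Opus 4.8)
The plan is to reuse the argument of Lemma~\ref{lm:reduction} almost verbatim, since a total order is by Definition~\ref{def:torder} a partial order, and the defining clauses for $\tonear{\leq}$ in Definition~\ref{def:toprox} have exactly the same shape as those for $\ponear{\leq}$ in Definition~\ref{def:poprox}. First I would fix two vertices $a,b \in X$ that are joined by a path in $\mathop{\bar{\mathcal{O}}}\limits_\leq(X)$; by the construction of $\mathop{\bar{\mathcal{O}}}\limits_\leq(X)$ (edges $a \to b$ whenever $a \leq b$), together with transitivity of $\leq$, the existence of such a path is equivalent to $a \leq b$.

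Next I would split into the two cases dictated by transitivity. In the first case there is no $x \in X \setminus \{a,b\}$ with $a \leq x \leq b$; then Definition~\ref{def:toprox} gives $\tonear{\leq}(a,b)=0$ at once, so the edge $a \to b$ already lies in $\mathop{\bar{\mathcal{P}}}\limits_\leq(X)$ and nothing more is needed. In the second case there is a finite saturated chain $a \leq d_1 \leq \cdots \leq d_n \leq b$, meaning no further element of $X$ can be inserted between any two consecutive terms; applying Definition~\ref{def:toprox} to each consecutive pair yields $\tonear{\leq}(a,d_1)=\tonear{\leq}(d_1,d_2)=\cdots=\tonear{\leq}(d_n,b)=0$, so the edges $a \to d_1 \to \cdots \to d_n \to b$ all appear in $\mathop{\bar{\mathcal{P}}}\limits_\leq(X)$ and concatenate to the required path.

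The only delicate point — the same one that is passed over quickly in the proof of Lemma~\ref{lm:reduction} — is the existence of a \emph{finite} saturated chain between $a$ and $b$ in the second case. This is immediate when $X$ is finite, which is the setting relevant to the video-frame application, and I would simply note this. It is worth remarking that for a dense total order such as $(\mathbb{Q},\leq)$ the second case is vacuous: whenever $a < b$ there is always an intermediate element, so $\tonear{\leq}$ induces no edges at all and the corresponding Hasse diagram is likewise edgeless, whence the implication holds trivially. Finally I would point out that the totality (connex) property of $\leq$ plays no role in this lemma; it is reserved for the step where the lemma is upgraded to the total-order counterpart of Theorem~\ref{thm:proxhassepo}.
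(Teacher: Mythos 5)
Your main argument coincides with the paper's proof of this lemma: a path between $a$ and $b$ in $\mathop{\bar{\mathcal{O}}}\limits_\leq(X)$ gives $a\leq b$, and the two-case split (either no $x\in X\setminus\{a,b\}$ with $a\leq x\leq b$, or a finite saturated chain $a\leq d_1\leq\cdots\leq d_n\leq b$) produces, via Definition~\ref{def:toprox}, either the single edge $a\to b$ or the edge sequence $a\to d_1\to\cdots\to d_n\to b$ in $\mathop{\bar{\mathcal{P}}}\limits_\leq(X)$. You are also right that the one delicate point---passed over both here and in Lemma~\ref{lm:reduction}---is why the second case always supplies a \emph{finite} saturated chain, and that this is unproblematic when $X$ is finite.

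Your proposed handling of the dense case, however, is wrong. For $(\mathbb{Q},\leq)$ the implication does not ``hold trivially'': the hypothesis of the lemma is still satisfied (for instance $0\leq 1$ gives an edge, hence a path, $0\to 1$ in $\mathop{\bar{\mathcal{O}}}\limits_\leq(\mathbb{Q})$), while the conclusion fails, because $\tonear{\leq}(a,b)=1$ for every pair of distinct rationals, so $\mathop{\bar{\mathcal{P}}}\limits_\leq(\mathbb{Q})$ has no edges between distinct vertices and hence no path from $0$ to $1$. The dense order is thus a counterexample to the unrestricted statement, not a vacuous instance of it; what fails there is exactly the dichotomy ``no intermediate element or a finite saturated chain'' on which both your argument and the paper's rest. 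The correct repair is to restrict to a hypothesis guaranteeing finite saturated chains between comparable elements (e.g.\ $X$ finite, or the order locally finite), not to claim the general statement survives.
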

\begin{proof}
	A path $a \rightarrow b$ in $\mathop{\bar{\mathcal{O}}}\limits_\leq(X)$ means $a \leq b$. By the transitivity of total order(def.~\ref{def:torder}) we can conclude that this leads to one of the two cases. Either $\not \exists x\ in X\; s.t. \; a \leq x \leq b $ or $\exists \{d_i\} \in X \; s.t. a\leq d_1 
	\leq \cdots \leq d_n \leq b$ such that no $x \in X$ can be inserted at any location in this sequence. For the first case from def.~\ref{def:toprox} we can write $\tonear{\leq}(a,b)=0$ and hence there is an edge $a \rightarrow b$ in $\mathop{\bar{\mathcal{P}}}\limits_\leq(X)$. For the second case usin def.~\ref{def:toprox} we can write $\tonear{\leq}(a,d_1)=\tonear{\leq}(a,d_1)=\cdots=\tonear{\leq}(d_{n-1},d_n)=\tonear{\leq}(d_n,b)=0$. Thus, there exist a sequence of edges $a \rightarrow d_1 \cdots \rightarrow d_{n}\rightarrow b$ in $\mathop{\bar{\mathcal{P}}}\limits_\leq(X)$ constituting a path between $a$ and $b$.
\end{proof}
\begin{theorem}\label{thm:proxhasseto}
	Let $(X,\{\leq\})$ be a realtor space where $X$ is a set, $\leq$ is a partial order and $\ponear{\leq}$ is the induced proximity as defined by def.~\ref{def:toprox}. Then, $\mathop{\check{\mathcal{P}}}\limits_\leq(X)$ is equivalent to $\mathop{\mathcal{H}s}\limits_\leq(X)$. 
\end{theorem}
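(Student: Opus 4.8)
The plan is to reuse, almost verbatim, the scheme behind Theorem~\ref{thm:proxhassepo}, since a total order is in particular a partial order and $\tonear{\leq}$ is defined (Definition~\ref{def:toprox}) by exactly the same covering condition that defined $\ponear{\leq}$. First I would note that $\mathop{\bar{\mathcal{P}}}\limits_\leq(X)$ sits inside $\mathop{\bar{\mathcal{O}}}\limits_\leq(X)$ as a subgraph on the same vertex set: if $\tonear{\leq}(a,b)=0$ then Definition~\ref{def:toprox} forces $a\leq b$, so each edge $a\rightarrow b$ of $\mathop{\bar{\mathcal{P}}}\limits_\leq(X)$ is an edge of $\mathop{\bar{\mathcal{O}}}\limits_\leq(X)$; the reflexive loops $a\rightarrow a$ produced by item $1^o$ of Theorem~\ref{thm:tonearprop} are trivial paths and are discarded just as in any Hasse diagram. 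Then Lemma~\ref{lm:toreduction} supplies, for every path of $\mathop{\bar{\mathcal{O}}}\limits_\leq(X)$, a path of $\mathop{\bar{\mathcal{P}}}\limits_\leq(X)$ joining the same endpoints. Together these two facts say that $\mathop{\bar{\mathcal{P}}}\limits_\leq(X)$ is a reachability-preserving subgraph of $\mathop{\bar{\mathcal{O}}}\limits_\leq(X)$, i.e. the two graphs have the same transitive closure.

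The second, and slightly less routine, step is minimality, which the proof of Theorem~\ref{thm:proxhassepo} leaves implicit. If $a\rightarrow b$ is an edge of $\mathop{\bar{\mathcal{P}}}\limits_\leq(X)$ with $a\neq b$, then by Definition~\ref{def:toprox} there is no $x\in X\setminus\{a,b\}$ with $a\leq x\leq b$, so in $\mathop{\bar{\mathcal{O}}}\limits_\leq(X)$ the only directed path from $a$ to $b$ is that single edge; deleting it therefore strictly shrinks the transitive closure. Hence $\mathop{\bar{\mathcal{P}}}\limits_\leq(X)$ is a \emph{minimal} reachability-preserving subgraph of $\mathop{\bar{\mathcal{O}}}\limits_\leq(X)$, i.e. a transitive reduction. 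Because $\mathop{\bar{\mathcal{O}}}\limits_\leq(X)$ is acyclic — antisymmetry and transitivity of $\leq$ kill every cycle on distinct vertices, the only remaining cycles being the loops already dropped — its transitive reduction is unique, and by definition $\mathop{\mathcal{H}s}\limits_\leq(X)$ \emph{is} that reduction; so $\mathop{\bar{\mathcal{P}}}\limits_\leq(X)=\mathop{\mathcal{H}s}\limits_\leq(X)$.

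A shorter equivalent argument that I would also record is a direct comparison of edge sets: for $a\neq b$, the edge $a\rightarrow b$ lies in $\mathop{\bar{\mathcal{P}}}\limits_\leq(X)$ iff $\tonear{\leq}(a,b)=0$ iff $a\leq b$ with no element lying strictly between, which is precisely the covering relation of $(X,\leq)$ and hence precisely the edge relation of $\mathop{\mathcal{H}s}\limits_\leq(X)$. Note that the totality clause $4^o$ of Theorem~\ref{thm:tonearprop} is not needed for the equivalence itself; it only records that, in the total-order case, some directed path joins any two vertices in one orientation or the other, so the common graph is a (possibly bi-infinite) path graph, matching Fig.~\ref{subfig:integers}.

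I expect the main obstacle to be bookkeeping rather than substance: one must (i) fix a convention for the reflexive loops so that ``equivalent'' holds literally, and (ii) not overlook the minimality half, since Lemma~\ref{lm:toreduction} gives only one of the two inclusions of transitive closures and, without the covering characterisation of $\tonear{\leq}$-edges, one could fear $\mathop{\bar{\mathcal{P}}}\limits_\leq(X)$ carries spurious edges. A minor point deserving a remark is the dense case such as $(\mathbb{Q},\leq)$: there are no covering pairs, so $\tonear{\leq}(a,b)=1$ for all $a\neq b$ and $\mathop{\mathcal{H}s}\limits_\leq(\mathbb{Q})$ has no edges, and the statement holds with the usual convention that this edgeless graph is the transitive reduction.
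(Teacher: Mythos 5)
Your proposal follows the same skeleton as the paper's proof---invoke Lemma~\ref{lm:toreduction} to show that $\mathop{\bar{\mathcal{P}}}\limits_\leq(X)$ preserves every reachability relation of $\mathop{\bar{\mathcal{O}}}\limits_\leq(X)$, call it a transitive reduction, and identify it with $\mathop{\mathcal{H}s}\limits_\leq(X)$, which is a transitive reduction by definition---but you supply exactly the pieces the paper leaves unstated. The paper's proof stops after the reachability step: it never checks that $\mathop{\bar{\mathcal{P}}}\limits_\leq(X)$ is a subgraph of $\mathop{\bar{\mathcal{O}}}\limits_\leq(X)$, never argues minimality of the edge set, and silently uses uniqueness of the transitive reduction (which needs acyclicity, i.e.\ antisymmetry of $\leq$) to pass from ``both are transitive reductions'' to ``they are equal.'' Your observation that an edge $a\rightarrow b$ of $\mathop{\bar{\mathcal{P}}}\limits_\leq(X)$ corresponds to a covering pair, so that deleting it destroys reachability, closes the minimality gap, and your direct edge-set comparison (edges of $\mathop{\bar{\mathcal{P}}}\limits_\leq(X)$ $=$ covering pairs $=$ edges of $\mathop{\mathcal{H}s}\limits_\leq(X)$) is actually the more robust argument: it proves the stated equivalence even for dense orders such as $(\mathbb{Q},\leq)$, where Lemma~\ref{lm:toreduction} itself fails (there is an edge $a\rightarrow b$ in $\mathop{\bar{\mathcal{O}}}\limits_\leq(X)$ but no path at all in $\mathop{\bar{\mathcal{P}}}\limits_\leq(X)$, since no covering pairs exist), so the transitive-reduction route, yours and the paper's alike, tacitly assumes finite maximal chains between comparable elements. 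Your handling of the reflexive loops is also a convention the paper never fixes. In short, your proof is correct, and where it departs from the paper it does so by repairing genuine omissions rather than by taking a different road.
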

\begin{proof}
	From lemma~\ref{lm:toreduction} we know that there is a path between two vertices in $\mathop{\check{\mathcal{P}}}\limits_\leq(X)$ if there is a path between them in $\mathop{\bar{\mathcal{O}}}\limits_\leq(X)$. Hence, $\mathop{\check{\mathcal{P}}}\limits_\leq(X)$ is a transitive reduction of $\mathop{\bar{\mathcal{O}}}\limits_\leq(X)$. We know that by definition $\mathop{\mathcal{H}s}\limits_\leq(X)$ is also a transitive reduction of $\mathop{\bar{\mathcal{O}}}\limits_\leq(X)$. 
\end{proof}
\begin{example}\label{ex:example6}
	Now consider $\mathbb{Z}$ as in  example \ref{ex:example1}, with the total order $\leq$. $(\mathbb{Z},\{\leq \})$ is a totally ordered set that can be visualized by the aid of the Hasse diagram displayed in Fig.~\ref{subfig:integers}. 
	
	Now, we move on to understanding how this order induces a proximity $\tonear{\leq}$ as per def.~\ref{def:toprox}. It can be seen that as there is no $x \in \mathbb{Z}$ such that $0 \leq x \leq 1$, we have $\tonear{\leq}(0,1)=0$. Moreover, as $0 \leq 1 \leq 2$, we have $\tonear{\leq}(0,2)=1$. It can also bee seen that $1 \not\leq 0$, hence we cannot talk about $\tonear{\leq}(1,0)$. Thus, the proximity inherits antisymetric nature of the underlying order as it is symmetric if both elements are the same. 
	
	We can see that if $a,b \in \mathbb{Z}$ then either $a \leq b$ or $b \leq a$, which is the totality(connex) property. Thus, results in the linear structure of fig.~\ref{subfig:integers} and is the reason why thew total order is also called a linear order. The connex property dictates that any two elements $a,b \in \mathbb{Z}$ are related in one of two ways. Either there is a set of elements $\{a,x_1,\cdots,x_n,b\}$ in $\mathbb{Z}$ such that $\tonear{\leq}(a,x_1)=0 \wedge\tonear{\leq}(x_1,x_2)=0\wedge \cdots \wedge \tonear{\leq}(x_n,b)=0$, or there exists a set $\{b,y_1,\cdots,y_m,a\}$ such that $\tonear{\leq}(b,y_1)=0\wedge \tonear{\leq}(y_1,y_2)=0\wedge \cdots \wedge \tonear{\leq}(y_n,a)=0$. Moreover, by Thm.~\ref{thm:proxhasseto} the proximity graph $\mathop{\check{\mathcal{P}}}\limits_\leq(X)$  is the same as the Hasse diagram shown in Fig.~\ref{subfig:integers}. \qquad \eot
\end{example}
We move on to the study of proximity induced by cyclic order defined in def.~\ref{def:corder}.
\begin{definition}\label{def:cprox}
	Let there be a realtor space $(X,\{\gamma\})$, where $\gamma$ is a cyclic order which yields triples $[a,b,c]$ implying that moving from $a$ to $c$ one passes through $b$. For  $a,b \in X$ we define
	\begin{align*}
		([a,b,c]) \wedge (\not\exists x \in X \setminus \{a,b\}\; s.t.\;[a,x,b]) \Leftrightarrow\; \cnear{\gamma}(a,b)=0 \\
		([a,b,c]) \wedge (\exists x \in X \setminus \{a,b\}\; s.t.\;[a,x,b]) \Leftrightarrow\; \cnear{\gamma}(a,b)=1
	\end{align*}
	where $\cnear{\gamma}$ is the proximity induced by the cyclic order $\gamma$.
\end{definition}
Now, we establish the properties of proximity induced by the cyclic order $\cnear{\gamma}$.
\begin{theorem}\label{thm:cnearprop}
	Let $(X,\{\gamma\})$ be a relator space where $X$ is a set and $\gamma$ is a cyclic order. Let $a,b,c \in X$ be three elements in $X$. Then, the proximity induced on $X$ by the total order $\gamma$ represented by $\cnear{\gamma}$ satisfies the following conditions:
	\begin{compactenum}[1$^o$]
		\item \textbf{Irreflexivity:} $\cnear{\gamma}(a,a)=1$
		\item \textbf{Antisymmetry:} if $\cnear{\gamma}(a,b)=0$ then $\cnear{\gamma}(b,a)=1$
		\item \textbf{Antitransitivity:} if $\cnear{\gamma}(a,b)=0$ and $\cnear{\gamma}(b,c)=0$ then $\cnear{\gamma}(a,c)=1$
		\item \textbf{Totality:} either $\big( \exists A=\{a,d_1,\cdots,d_n,b\}  \subseteq X \; s.t. \; \mathop{\sum}\limits_{i=1}^{n+1}\cnear{\gamma}(A_i,A_{i+1})=0 \big)$ or $\big( \exists B=\{b,e_1,\cdots,e_m,a\}  \subseteq X \; s.t. \; \mathop{\sum}\limits_{i=1}^{m+1}\cnear{\gamma}(B_i,B_{i+1})=0 \big)$
		\item \textbf{Cyclicity:} if $ \big( \exists A=\{a,d_1,\cdots,d_m,b,e_1,\cdots,e_n,c\} \; s.t. \; \mathop{\sum}\limits_{i=1}^{n+m+2}\cnear{\gamma}(A_i,A_{i+1})=0  \big)$ then $\big( \exists B=\{b,f_1,\cdots,f_o,c,g_1,\cdots,g_p,a\} \; s.t. \; \mathop{\sum}\limits_{i=1}^{o+p+2}\cnear{\gamma}(B_i,B_{i+1})=0  \big)$
	\end{compactenum} 
where $A_i$ is the $i-$th element of set $A$ and $m,n,o,p \in \mathbb{Z}^+$.
\end{theorem}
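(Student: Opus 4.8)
The plan is to handle the five assertions one at a time, each time unwinding the statement about the induced proximity $\cnear{\gamma}$ into a statement about the triple relation $\gamma$ via Definition~\ref{def:cprox} and then closing it with the four cyclic-order axioms of Definition~\ref{def:corder}. Throughout I read ``$\cnear{\gamma}(x,y)=0$'' as ``$x\neq y$ and no element of $X$ lies strictly between $x$ and $y$ in the cyclic order'' (equivalently, $y$ is the immediate $\gamma$-successor of $x$), which is what the two displayed equivalences of Definition~\ref{def:cprox} encode. For \emph{Irreflexivity} I would first observe that a triple of the form $[a,a,c]$ cannot hold: applying Cyclicity to it twice gives $[a,c,a]$ and then $[c,a,a]$, which Antisymmetry (applied to $[a,a,c]$) forbids; hence the ``$=0$'' branch of Definition~\ref{def:cprox} is never available for the pair $(a,a)$, so $\cnear{\gamma}(a,a)=1$.

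For \emph{Antisymmetry}, suppose $\cnear{\gamma}(a,b)=0$; Definition~\ref{def:cprox} supplies a $c\notin\{a,b\}$ with $[a,b,c]$, and Cyclicity turns this into $[b,c,a]$, so $c$ lies strictly between $b$ and $a$ and Definition~\ref{def:cprox} gives $\cnear{\gamma}(b,a)=1$. For \emph{Antitransitivity}, assume $\cnear{\gamma}(a,b)=0$ and $\cnear{\gamma}(b,c)=0$. The witnessing triples force $a\neq b$ and $b\neq c$, while $a\neq c$ follows because $a=c$ together with the hypotheses would contradict 2$^o$ just established; so $a,b,c$ are distinct and Totality gives $[a,b,c]$ or $[c,b,a]$. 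In the latter case two applications of Cyclicity produce $[a,c,b]$, which places $c$ strictly between $a$ and $b$ and contradicts $\cnear{\gamma}(a,b)=0$; hence $[a,b,c]$ holds, $b$ lies strictly between $a$ and $c$, and $\cnear{\gamma}(a,c)=1$.

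For \emph{Totality} and \emph{Cyclicity} I would argue along the walk of immediate $\gamma$-successors. Starting at $a$, build $a=A_1,A_2,\dots$ with each $A_{i+1}$ the immediate $\gamma$-successor of $A_i$, so that $\cnear{\gamma}(A_i,A_{i+1})=0$ at every step; using Transitivity and Totality of $\gamma$ one checks the walk cannot revisit a vertex until all of $X$ has been visited, and in particular it reaches $b$, so its initial segment up to $b$ is a set $A=\{a,d_1,\dots,d_n,b\}$ with $\mathop{\sum}\limits_{i=1}^{n+1}\cnear{\gamma}(A_i,A_{i+1})=0$; continuing the walk from $b$ around to $a$ yields the set $B$, so both disjuncts hold and \emph{a fortiori} the ``either/or'' does. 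For \emph{Cyclicity}, a chain realizing the hypothesis says $a,d_1,\dots,d_m,b,e_1,\dots,e_n,c$ occur consecutively along this walk; re-reading it from $b$ (through the $e_i$'s to $c$, then on around to $a$) gives a chain $\{b,f_1,\dots,f_o,c,g_1,\dots,g_p,a\}$ with all consecutive $\cnear{\gamma}$-values equal to $0$, which is exactly the conclusion.

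The main obstacle I expect is making the successor-walk argument for Totality and Cyclicity fully rigorous from the abstract axioms: one essentially needs the representation fact that a total cyclic order on $X$ is realized by a single cyclic arrangement of its elements, together with the claim that immediate-successor steps never ``skip'' — both of which use finiteness (or at least well-foundedness) of $X$ and a careful interplay of Transitivity, Antisymmetry and Totality to exclude premature self-intersection of the walk. By contrast, Irreflexivity, Antisymmetry and Antitransitivity are each a short chase through Definition~\ref{def:cprox} and the axioms once the relevant triples are exposed.
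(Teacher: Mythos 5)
Your proposal is correct in substance and runs parallel to the paper's proof---both reduce each clause to statements about triples via Definition~\ref{def:cprox} and then close with the axioms of Definition~\ref{def:corder}---but you deviate in instructive ways on three of the five items. For irreflexivity the paper manufactures a degenerate triple $[a,c,a]$ (by a loose appeal to transitivity applied to $[a,b,c]$ and $[b,c,a]$) and then reads off $\cnear{\gamma}(a,a)=1$ from the second branch of the definition; you instead rule out $[a,a,c]$ altogether by cyclicity plus antisymmetry, which is tidier and avoids asserting a triple that the antisymmetry axiom in fact forbids. For antitransitivity the paper glues $\cnear{\gamma}(a,b)=0$ and $\cnear{\gamma}(b,c)=0$ into $[a,b,c]$ by ``transitivity'', which does not literally match the axiom's format; your route---check $a,b,c$ are distinct (using clause $2^o$), invoke totality, and eliminate $[c,b,a]$ by cyclicity against $\cnear{\gamma}(a,b)=0$---is the more defensible derivation of the same triple. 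Antisymmetry is essentially identical in both treatments (cyclicity applied to the witness triple, then either the antisymmetry of $\gamma$ or the existence of an intermediate element). For totality and cyclicity the two arguments share the same core: the paper simply asserts, from transitivity, that any traversal can be refined into a chain of immediately adjacent elements with all consecutive $\cnear{\gamma}$-values equal to $0$, whereas you build the immediate-successor walk explicitly and flag that its existence and non-self-intersection require finiteness (or well-foundedness) of $X$ together with the representation of a total cyclic order as a single cyclic arrangement. That hypothesis is silently assumed by the paper as well---for a dense total cyclic order such as the circle no two points are adjacent and clause $4^o$ fails---so the gap you acknowledge is precisely the gap already present in the published argument rather than a defect peculiar to your proof.
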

\begin{proof}
	\begin{compactenum}[1$^o$]
		\item Cyclic order requires three elements to define it. If there exists $[a,b,c]$ meaning that the elements are ordered in such a way that moving from $a$ to $c$ we must pass from $b$, then from the cyclicity property of the cyclic order(def.~\ref{def:corder}) we have $[b,c,a]$. Now using the transitivity property of the cyclic order(def.~\ref{def:corder}) if $[a,b,c]$ and $[b,c,a]$ then we can write $[a,c,a]$. Hence, from def.~\ref{def:cprox} we can coclude that $\cnear{\gamma}(a,a)=1$ as there is $c \in X$ such that $[a,c,a]$.
		\item It must be noted that symmetry in a function corresponds to switching the locations of input variables.From def.~\ref{def:cprox} we can conclude that $\cnear{\gamma}(a,b)=0$ means that $[a,b,c]$ and $\cnear{\gamma}(b,a)=0$ means that $[b,a,c]$. Using the cyclicity property of the cyclic order(def.~\ref{def:corder}) we can see that $[b,a,c] \Rightarrow [a,c,b] \Rightarrow [c,b,a]$. Now using the antisymmetry property of the cyclic order(def.~\ref{def:corder}) if $[a,b,c]$ then not $[c,b,a]$.
		\item From def.~\ref{def:cprox} we can conclude that $\cnear{\gamma}(a,b)=0$ means that there exists a path from $a$ passing through $b$ and moving onwards. Moreover, there exists no such $x\in X$, for which $[a,x,b]$. Using the same reasoning we can conclude from $\cnear{\gamma}(b,c)=0$ that there exists no such $x \in X$ for which $[b,x,c]$. Combining the two using transitivity of the cyclic order(def.~\ref{def:corder}) we can write $[a,b,c]$. Now, using the def.~\ref{def:cprox} we can see that $\cnear{\gamma}(a,c)=1$. 
		\item From the totality property of the cyclic order(def.~\ref{def:corder}) it can seen that for three distinct points $a,x,b \in X$ either $[a,x,b]$ or $[b,x,a]$. Using the transitive property of the cyclic order(def.~\ref{def:corder}) we can conclude that there exists a set of elements $\{d_1,\cdots,d_n\} \subseteq X$ such that $[a,d_1,d_2],[d_1,d_2,d_3],\cdots,[d_{n-2},d_{n-1},d_n],[d_{n-1},d_n,b]$. Moreover there exists no arbitrary $x\in X$ such that $[a,x,d_1],[d_1,x,d_2],\cdots,[d_{n-1},\linebreak x,d_n],[d_n, x,b]$.This means that there exists a sequence of adjacent elements traversing which we can go from $a$ to $b$. Using def.~\ref{def:cprox} we can rewrite this as $\cnear{\gamma}(a,d_1)=\cnear{\gamma}(d_1,d_2)=\cdots=\cnear{\gamma}(d_{n-1},d_n)=\cnear{\gamma}(d_n,b)=0$. This can be rewritten as $\exists A={a,d_1,\cdots,d_n,b} \subseteq X$ such that $\mathop{\sum}\limits_{i=1}^{n+1}\cnear{\gamma}(A_i,A_{i+1})=0$. Using similar line of argument we can conclude that $[b,x,a]$ can be rewritten as  $\exists B={b,e_1,\cdots,e_m,a} \subseteq X$ such that $\mathop{\sum}\limits_{i=1}^{m+1}\cnear{\gamma}(B_i,B_{i+1})=0$.
		\item We know that the ternary relation $\gamma$ yields $[a,b,c]$ which means that elements are ordered in such a way that one has to go through $b$ when moving from $a$ to $c$, from def.~\ref{def:corder}. We know that $\exists A=\{a,d_1,\cdots,d_m,b,e_1,\cdots,e_n,c\} \; s.t. \; \mathop{\sum}\limits_{i=1}^{n+m+2}\cnear{\gamma}(A_i,A_{i+1}) \linebreak =0$ means that $\cnear{\gamma}(a,d_1)=\cnear{\gamma}(d_1,d_2)=\cdots=\cnear{\gamma}(d_{m-1},d_m)=\cnear{\gamma}(d_m,b)=\cnear{\gamma}(b,e_1)=\cnear{\gamma}(e_{n-1},e_n)=\cnear{\gamma}(e_n,c)=0$. Let us consider two adjacent terms in this chain e.g. $\cnear{\gamma}(a,d_1)=\cnear{\gamma}(d_1,d_2)=0$. From def.~\ref{def:cprox} $\cnear{\gamma}(a,d_1)=0$ means that $a$ and $d_1$ are adjacent in some path i.e. $[a,d_1,y]$ where $y \in X$ and there exists no $x \in X$ such that $[a,x,d_1]$ . Combining this with $\cnear{\gamma}(d_1,d_2)=0$ which yields that $[d_1,d_2,\tilde{y}]$ for some $\tilde{y} \in X$ and there exists no $\tilde{x} \in X$ such that $[d_1,\tilde{x},d_2]$, we get $[a,d_1,d_2]$. If we go on doing this we can get $[a,b,c]$. Now using the cyclicity property of the cyclic order(def.~\ref{def:corder}) if $[a,b,c]$ then $[b,c,a]$. Using the transativity property of the cyclic order(def.~\ref{def:corder}) we can decompose $[b,c,a]$ into $[b,c,x]$ and $[c,x,a]$ where $x \in X$. Using subsequent decompositions such that we have paths in which all the elements are adjacents(i.e. they cannot be further decomposed in this way) we get $[b,f_1,f_2],[f_1,f_2,f_3],\cdots,[f_{o-1},f_{o},c],[f_{o},c,g_1],[c,g_1,g_2],\cdots,[g_{p-1},g_p,a]$ where $\{f_i\} \linebreak ,\{g_j\} \in X$ and $p,q \in \mathbb{Z}^+$. Using def.~\ref{def:cprox} $[b,f_1,f_2]$ yields $\cnear{\gamma}(b,f_1)=0$. In a similar fashion we can write $\cnear{\gamma}(b,f_1)=\cnear{\gamma}(f_1,f_2)=\cdots=\cnear{\gamma}(f_{o-1},f_o)=\cnear{\gamma}(f_o,c)=\cnear{\gamma}(c,g_1)=\cdots=\cnear{\gamma}(g_{p-1},g_p)=\cnear{\gamma}(g_p,a)=0$. This can inturn be simplified as $\exists B=\{b,f_1,\cdots,f_o,c,g_1, \cdots,g_p,a\} \subseteq X \; s.t. \; \mathop{\sum}\limits_{i=1}^{o+p+2}\cnear{\gamma}(B_i,B_{i+1})=0$
	\end{compactenum}
\end{proof}
Given a relator space $(X,\{\gamma\})$, where $X$ is a set and $\gamma$ is a cyclic order, the graph $\mathop{\mathring{\mathcal{O}}}\limits_\leq(X)$ has the vertex set $X$ with a sequence of edges $a \rightarrow b \rightarrow c$ if $[a,b,c]$. The graph $\mathop{\mathring{\mathcal{P}}}\limits_\leq(X)$ also has the vertex set $X$ but there exists an edge $a \rightarrow b$ if $\cnear{\gamma}(a,b)=0$.   
\begin{lemma}\label{lm:creduction}
Let $(X,\{\gamma \})$ be a realtor space where $X$ is a set and $\gamma$ is a cyclic order. Then, if there exists a path in the graph $\mathop{\mathring{\mathcal{O}}}\limits_\gamma(X)$, then there also exists a path between these vertices in $\mathop{\mathring{\mathcal{P}}}\limits_\gamma(X)$.
\end{lemma}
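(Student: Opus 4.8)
The plan is to mirror the proofs of Lemma~\ref{lm:reduction} and Lemma~\ref{lm:toreduction}, since the statement for the cyclic order is the exact analogue. First I would take two vertices $a,b$ joined by a path in $\mathop{\mathring{\mathcal{O}}}\limits_\gamma(X)$. By construction of $\mathop{\mathring{\mathcal{O}}}\limits_\gamma(X)$, such a path is a sequence of directed edges arising from a chain of triples $[a,c_1,c_2],[c_1,c_2,c_3],\dots$ that together witness passage from $a$ to $b$; using the transitivity axiom of the cyclic order (Def.~\ref{def:corder}) repeatedly, this collapses to a single triple $[a,x,b]$ (or directly to adjacency) for appropriate $x$, and more importantly one can extract from it a \emph{maximal} refinement, i.e.\ a chain $a \leq$-style $a, d_1,\dots,d_n, b$ with $[a,d_1,d_2],\dots,[d_{n-1},d_n,b]$ holding and no further element insertable between consecutive terms. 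This is exactly the situation handled in item $4^o$ of Theorem~\ref{thm:cnearprop}.

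Next I would invoke Definition~\ref{def:cprox}: for each consecutive pair $(d_i,d_{i+1})$ in that maximal chain, the non-insertability of any $x$ with $[d_i,x,d_{i+1}]$ gives $\cnear{\gamma}(d_i,d_{i+1})=0$, and likewise $\cnear{\gamma}(a,d_1)=0$ and $\cnear{\gamma}(d_n,b)=0$. Hence the edges $a \rightarrow d_1 \rightarrow \cdots \rightarrow d_n \rightarrow b$ all exist in $\mathop{\mathring{\mathcal{P}}}\limits_\gamma(X)$, which is a path between $a$ and $b$ in that graph. As in the two earlier lemmas, there is also the degenerate case where no $x$ satisfies $[a,x,b]$ at all, in which case $\cnear{\gamma}(a,b)=0$ directly and the single edge $a\rightarrow b$ does the job.

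The step I expect to be the main obstacle is the reduction of an arbitrary $\mathop{\mathring{\mathcal{O}}}\limits_\gamma(X)$-path to the maximal chain of pairwise-adjacent triples. In the partial/total order case this is transparent because $\leq$ is itself a binary relation, so a path \emph{is} a chain $a \leq d_1 \leq \cdots \leq b$ and Dedekind-style maximality is clear. For the cyclic order the underlying data is ternary, so one must be careful to (i) correctly interpret what ``a path in $\mathop{\mathring{\mathcal{O}}}\limits_\gamma(X)$'' means in terms of overlapping triples, (ii) apply cyclicity and transitivity (Def.~\ref{def:corder}) to splice and then maximally refine those triples, and (iii) make sure the refinement terminates — which it does when $X$ is finite, and which I would either assume or phrase in terms of the existence of a non-refinable chain, exactly as item $4^o$ of Theorem~\ref{thm:cnearprop} already does. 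Once that combinatorial bookkeeping is in place, the rest is a direct application of Definition~\ref{def:cprox}, entirely parallel to Lemma~\ref{lm:toreduction}.
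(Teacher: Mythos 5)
Your proposal is correct and follows essentially the same route as the paper: the paper's proof also splits into the case where no element is insertable (giving $\cnear{\gamma}(a,b)=\cnear{\gamma}(b,c)=0$ directly) and the case of a maximally refined chain of overlapping triples $[a,d_1,d_2],\dots,[d_{n-1},d_n,b],[d_n,b,e_1],\dots,[e_{m-1},e_m,c]$, each consecutive pair yielding proximity $0$ by Def.~\ref{def:cprox} and hence a path in $\mathop{\mathring{\mathcal{P}}}\limits_\gamma(X)$. The only differences are cosmetic: the paper phrases the given path as $a\rightarrow b\rightarrow c$ coming from a single triple $[a,b,c]$ and refines both segments, and it passes silently over the termination/maximality issue that you flag explicitly.
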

\begin{proof}
  It must be noted that $\gamma$ is a ternary relation and yields $[a,b,c]$ stating that the elements are ordered in such a way that when moving from $a$ to $c$ we must pass through $b$. Hence, a path $a \rightarrow b \rightarrow c$ in $\mathop{\mathring{\mathcal{O}}}\limits_\leq(X)$ means $[a,b,c]$. Using the transitivity of cyclic order(def.~\ref{def:corder}) we can see that there can two cases. Either, there exist no $x,y \in X$ such that $[a,x,b]$ and $[b,y,c]$. Or, there exist $\{d_i\},\{e_j\} \in X$ such that $[a,d_1,d_2],[d_1,d_2,d_3],\cdots,[d_{n-1},d_n,b],[d_n,b,e_1],\cdots,[e_{m-1},e_m,c]$ and there exist no $x \in X$ which can be inserted in this sequence at any place. For the first case using def.~\ref{def:cprox} we can write $\cnear{\gamma}(a,b)=\cnear{\gamma}(b,c)=0$, hence there exists a sequence of edges $a \rightarrow b \rightarrow c$ in graph $\mathop{\mathring{\mathcal{P}}}\limits_\leq(X)$ and hence a path between $a$ and $b$. For the second case using def.~\ref{def:cprox} we can write $\cnear{\gamma}(a,d_1)=\cnear{\gamma}(d_1,d_2)=\cdots=\cnear{\gamma}(d_n,b)=\cnear{\gamma}(b,e_1)=\cnear{\gamma}(e_1,e_2)=\cdots=\cnear{\gamma}(e_m,c)=0$. Thus we have a sequence of edges $a \rightarrow d_1 \rightarrow \cdots \rightarrow d_n \rightarrow b 
  \rightarrow e_1 \rightarrow \cdots \rightarrow e_m \rightarrow c$ in $\mathop{\mathring{\mathcal{P}}}\limits_\leq(X)$ and hence a path between $a$ and $b$.
    
\end{proof}
\begin{theorem}\label{thm:proxhassec}
Let $(X,\{\gamma \})$ be a realtor space where $X$ is a set, $\gamma$ is a cyclic order and $\cnear{\gamma}$ is the induced proximity as defined by def.~\ref{def:cprox}. Then, $\mathop{\mathring{\mathcal{P}}}\limits_\gamma(X)$ is equivalent to $\mathop{\mathcal{H}s}\limits_\gamma(X)$. 
\end{theorem}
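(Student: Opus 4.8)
The plan is to mirror the argument already used for the partial order case in Theorem~\ref{thm:proxhassepo} and for the total order case in Theorem~\ref{thm:proxhasseto}. Recall that in both of those proofs the essential observation is: (i) by the relevant reduction lemma, the proximity graph retains a path between every pair of vertices joined by a path in the order graph, so the proximity graph is a \emph{transitive reduction} of the order graph; and (ii) the Hasse diagram is \emph{by definition} a transitive reduction of the same order graph; hence the two coincide. For the cyclic case I would carry out the same two steps with $\mathop{\mathring{\mathcal{O}}}\limits_\gamma(X)$ in place of $\mathop{\check{\mathcal{O}}}\limits_\leq(X)$, $\mathop{\mathring{\mathcal{P}}}\limits_\gamma(X)$ in place of $\mathop{\check{\mathcal{P}}}\limits_\leq(X)$, and Lemma~\ref{lm:creduction} in place of Lemma~\ref{lm:reduction}.

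Concretely, first I would invoke Lemma~\ref{lm:creduction}: if there is a path between two vertices in $\mathop{\mathring{\mathcal{O}}}\limits_\gamma(X)$, then there is a path between the same two vertices in $\mathop{\mathring{\mathcal{P}}}\limits_\gamma(X)$. Next I would check the reverse containment at the edge level: an edge $a\rightarrow b$ in $\mathop{\mathring{\mathcal{P}}}\limits_\gamma(X)$ means $\cnear{\gamma}(a,b)=0$, which by def.~\ref{def:cprox} forces $[a,x,b]$ for no intermediate $x$ but still gives $[a,b,c]$ for some $c$, so the pair $(a,b)$ lies on a path in $\mathop{\mathring{\mathcal{O}}}\limits_\gamma(X)$; thus $\mathop{\mathring{\mathcal{P}}}\limits_\gamma(X)$ has no edges outside the reachability structure of $\mathop{\mathring{\mathcal{O}}}\limits_\gamma(X)$. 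Together these say $\mathop{\mathring{\mathcal{P}}}\limits_\gamma(X)$ is a transitive reduction of $\mathop{\mathring{\mathcal{O}}}\limits_\gamma(X)$. Since $\mathop{\mathcal{H}s}\limits_\gamma(X)$ is, by the definition recalled in Section~\ref{sec:prelim}, also the transitive reduction of $\mathop{\mathring{\mathcal{O}}}\limits_\gamma(X)$, and the transitive reduction of a graph arising from an order is unique, the two graphs are equivalent.

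I expect the main obstacle to be the uniqueness / well-definedness point hiding in the phrase ``the transitive reduction.'' Transitive reduction is unique for directed acyclic graphs, but the order graph here is cyclic, so one must argue that the relevant reduction is still canonical — e.g. by passing to the description in terms of \emph{covering} triples $[a,b,c]$ with no insertable intermediate element, exactly the triples that def.~\ref{def:cprox} singles out via $\cnear{\gamma}=0$. I would handle this by showing directly that an edge $a\rightarrow b$ survives in the reduction precisely when $\cnear{\gamma}(a,b)=0$, i.e. when $b$ immediately follows $a$ along the cycle, using the cyclicity and transitivity axioms of def.~\ref{def:corder} together with the antitransitivity and cyclicity properties established in Theorem~\ref{thm:cnearprop}; this pins down both $\mathop{\mathring{\mathcal{P}}}\limits_\gamma(X)$ and $\mathop{\mathcal{H}s}\limits_\gamma(X)$ as the same explicitly described graph and sidesteps the need for a general uniqueness theorem. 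The remaining steps are then routine and parallel to the earlier proofs.
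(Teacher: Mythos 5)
Your proposal follows essentially the same route as the paper's own proof: invoke Lemma~\ref{lm:creduction} to transfer paths from $\mathop{\mathring{\mathcal{O}}}\limits_\gamma(X)$ to $\mathop{\mathring{\mathcal{P}}}\limits_\gamma(X)$, conclude that the latter is a transitive reduction of the order graph, and identify it with $\mathop{\mathcal{H}s}\limits_\gamma(X)$, which is by definition such a reduction. If anything you are more careful than the published argument, which stops after the path-transfer step and neither checks that $\mathop{\mathring{\mathcal{P}}}\limits_\gamma(X)$ has no edges outside the reachability structure of $\mathop{\mathring{\mathcal{O}}}\limits_\gamma(X)$ nor addresses uniqueness of the reduction for a graph that is not acyclic --- the two points you rightly flag and propose to patch via the covering-triple characterization of $\cnear{\gamma}(a,b)=0$.
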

\begin{proof}
 From lemma~\ref{lm:creduction} it can be seen that $\mathop{\mathring{\mathcal{P}}}\limits_\gamma(X)$ has a path between every pair of vertices that are connected by a path in $\mathop{\mathring{\mathcal{O}}}\limits_\gamma(X)$. Thus it is a transitive reduction of $\mathop{\mathring{\mathcal{O}}}\limits_\leq(X)$. We know that by definition $\mathop{\mathcal{H}s}\limits_\gamma(X)$ is also a transitive reduction of $\mathop{\mathring{\mathcal{O}}}\limits_\leq(X)$
\end{proof}
\begin{example}\label{ex:example7}
	Consider $X=\{v_1,v_2,v_3,v_4,v_5\}$ with the relation $\gamma$ as in example \ref{ex:example3}. We know that $\gamma$ yields triples $[a,b,c]$ which mean that when moving from $a$ to $c$ one has to pass through $b$. For more on the relation $\gamma$ consult example \ref{ex:example3}. Let us consider the relator space $(X,\{\gamma\})$ such that $\gamma$ is a cyclic order as per def.\ref{def:torder}. The relator space can be visualized as the Hasse diagram shown in Fig.~\ref{subfig:cycle}. 
	
	Let us look at how the cyclic order $\gamma$ induces the proximity $\cnear{\gamma}$. It can be seen that as there exists no $x \in X$ such that $[a,x,b]$ we have $\cnear{\gamma}(a,b)=0$. Here, we must note that the totality property of def.~\ref{def:corder} dictates that any three elements in $X$ are a part of some path. Hidden in the above is an assumption that there exists a c such that $[a,b,c]$ hence the existence of a path between $a$ and $b$ is guarrenteed. Moreover, from Thm.~\ref{thm:proxhassec} the proximity graph $\mathop{\mathring{\mathcal{P}}}\limits_\leq(X)$ is the same as the Hasse diagram shown in Fig.~\ref{subfig:cycle}. \qquad \eot
\end{example}
\section{Applications}\label{sec:apps}
This section considers the possible applications of order induced proximities. 
\subsection{Maximal centroidal vortices on images}\label{subsec:mcv}
For this purpose we consider a triangulated space. A set of triangles with a nonempty intersection is a \emph{nerve}. The nerve with maximal number of triangles is termed a \emph{maximal nuclear cluster}(MNC), and the common intersection of this is the \emph{nucleus}. Each of the triangles in MNC is termed as the \emph{$1$-spoke}($sk_1$). To keep it simple we assume that there is only one MNC in the triangulation. 

Generalizing these structures we have $k$-spokes($sk_k$), that are triangles sharing an intersection with $s_{k-1}$ but not with $sk_{k-2}$. This is a recursive definition for which the base case is $sk_0$ which is the nucleus. Collection of all the $sk_k$ is the \emph{$k-$spoke complex}($skcx_k$). We can see that the MNC is $skcx_1$, and hence spoke complexes generalize the notion of MNC. Associated with the $skcx_k$ is the $k-$maximal cycle($mcyc_k$), that is the cycle formed by centroids of all the $sk_k$. A collection of $\{mcyc_k\}_{k\in \mathbb{Z}^+}$ is termed a \emph{vortex}. For further detail we refer reader to \cite{ahmad2018maximal}.

The notion of order gives us a systematic way of constructing such cycles. Let $\phi:2^K \rightarrow \mathbb{R}$ be a real valued probe function that attaches description to the subsets of $K$. Thus, for $skcx_k$ we  have a set $X_k=\{\phi(\triangle_i) s.t. \triangle_i \in skcx_k\}$. We can use $\leq$ to establish a total order(def.~\ref{def:torder}) $X_k$ such that $\phi(\triangle_1) \leq \cdots \leq \phi(\triangle_{n-1}) \leq \phi(\triangle_n)$. Let us define a ternary relation $\gamma_\leq$ induced by $\leq$ as $(a \leq b \leq c) \lor (b \leq c \leq a) \lor(c \leq a \leq b)  \Leftrightarrow [a,b,c]$.
\begin{lemma}\label{lm:app}
	Let $(X,\leq)$ be a relator space where $X$ is a set and $\leq$ is a total order. Define a ternary relation $\gamma_\leq$ such that for $a,b,c \in X$, $(a \leq b \leq c) \lor (b \leq c \leq a) \lor(c \leq a \leq b)  \Leftrightarrow [a,b,c]$. Then, $\gamma_\leq$ satisfies the following properties:
	\begin{compactenum}[1$^o$]
		\item \textbf{Cyclicity:} if $[a,b,c]$, then $[b,c,a]$
		\item \textbf{Antisymmetry:} if $[a,b,c]$ then not $[c,b,a]$
		\item \textbf{Transitivity:} if $[a,b,c]$ and $[a,c,d]$, then $[a,b,d]$
		\item \textbf{Totality:} if $a,b,c$ are distinct then either,$[a,b,c]$ or $[c,b,a]$
	\end{compactenum}
\end{lemma}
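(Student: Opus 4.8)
The plan is to verify each of the four properties of a cyclic order directly from the definition of $\gamma_\leq$, using only the axioms of the total order $\leq$ from Definition \ref{def:torder} (reflexivity, antisymmetry, transitivity, and totality). The central observation is that the defining condition of $[a,b,c]$ is a disjunction of three ``rotations'' of the chain $a\leq b\leq c$, so cyclicity will be almost immediate, and the remaining properties will follow by case analysis on which disjunct holds, combined with the totality of $\leq$ to rule out or force comparabilities.

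First I would handle \textbf{Cyclicity}: the definition of $[a,b,c]$ is literally symmetric under the cyclic shift $(a,b,c)\mapsto(b,c,a)$, since the three disjuncts $(a\leq b\leq c)\lor(b\leq c\leq a)\lor(c\leq a\leq b)$ are a permutation of the three disjuncts defining $[b,c,a]$, namely $(b\leq c\leq a)\lor(c\leq a\leq b)\lor(a\leq b\leq c)$. So $[a,b,c]\Leftrightarrow[b,c,a]$ outright. Next, \textbf{Totality}: given distinct $a,b,c$, by totality of $\leq$ the element $b$ is comparable to both $a$ and $c$, and $a,c$ are comparable. A short case split on the relative position of $b$ shows that either $b$ lies ``between'' $a$ and $c$ in one direction (giving $[a,b,c]$) or in the other (giving $[c,b,a]$); the cases where $b$ is an extremum of the three still land in one of these two via one of the rotated disjuncts. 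For \textbf{Antisymmetry}, I would assume both $[a,b,c]$ and $[c,b,a]$ and derive $a=b$ or $b=c$ or $a=c$ (contradicting distinctness, or, if we allow equalities, showing the statement is vacuous/degenerate): each of the three disjuncts of $[a,b,c]$ paired with each of the three disjuncts of $[c,b,a]$ forces, by antisymmetry of $\leq$, a collapse of two of the elements. This is nine subcases but each is a one-line application of antisymmetry of $\leq$.

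The main obstacle is \textbf{Transitivity}: assuming $[a,b,c]$ and $[a,c,d]$, I must produce $[a,b,d]$. Here one unwinds $[a,b,c]$ into its three disjuncts and $[a,c,d]$ into its three disjuncts, giving nine cases; in each, I would use transitivity of $\leq$ to chain inequalities and, where the disjuncts are ``rotated'' rather than in the canonical increasing order, use totality of $\leq$ to pin down the missing comparison and then re-assemble one of the three disjuncts defining $[a,b,d]$. The delicate cases are those where, say, $[a,b,c]$ holds via $b\leq c\leq a$ while $[a,c,d]$ holds via $c\leq d\leq a$ — one must check that the combined information about the positions of $b,c,d$ relative to $a$ still yields one of the rotations $a\leq b\leq d$, $b\leq d\leq a$, or $d\leq a\leq b$. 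I expect that tracking, for each of $b,c,d$, whether it sits ``above'' or ``below'' $a$ in the linear order, together with the induced order among those lying on the same side, resolves every case; this bookkeeping is the real content of the proof, though no single step is hard.
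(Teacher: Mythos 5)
Your proposal is correct and follows essentially the same route as the paper: each of the four properties is verified directly from the three-disjunct definition of $[a,b,c]$, with cyclicity read off from the rotational symmetry of the disjuncts, antisymmetry and transitivity handled by pairing disjuncts (the paper likewise finds that only four of the nine transitivity combinations are consistent, the rest collapsing two elements via antisymmetry of $\leq$), and totality obtained by splitting the six orderings of distinct $a,b,c$ between $[a,b,c]$ and $[c,b,a]$. The only small difference is that in the transitivity step no appeal to totality of $\leq$ is actually needed: in every surviving case the two chains share $a$ and $c$ and compose by transitivity alone, exactly as in the paper's four listed combinations.
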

\begin{proof}
	\begin{compactenum}[1$^o$]
		\item From the definition of $\gamma_\leq$ we can see that $[a,b,c]$ is equivalent to $(a \leq b \leq c) \lor (b \leq c \leq a) \lor(c \leq a \leq b)$. It can also be seen by substitution that $[b,c,a]$ is equivalent to $(b \leq c \leq a) \lor (c \leq a \leq b) \lor (a \leq b \leq b)$. Hence $[a,b,c]$ and $[b,c,a]$ are equivalent.
		\item We can see from the definition of $\gamma_\leq$ that $[a,b,c]$ stands for $(a \leq b \leq c) \lor (b \leq c \leq a) \lor(c \leq a \leq b)$. Moreover, not $[c,a,d]$ would stand for $\neg \big((c \leq b \leq a) \lor (b \leq a \leq c) \lor (a \leq c \leq b)\big)$, that can be written as $\neg(c \leq b \leq a) \wedge \neg(b \leq a \leq c) \wedge \neg(a \leq c \leq b)$. It can be seen for each of the conditions of $[a,b,c], \neg(c \leq b \leq a) \wedge \neg(b \leq a \leq c) \wedge \neg(a \leq c \leq b)$ holds. This is because none of the conditions of $[a,b,c]$ are the same as any of the conditions in $[c,b,a]$.
		\item From the definition of $\gamma_\leq$ we can write $[a,b,c]$ as $(a \leq b \leq c) \lor (b \leq c \leq a) \lor(c \leq a \leq b) $ and $[a,c,d]$ as $(a \leq c \leq c=d) \lor (c \leq d \leq a) \lor(d \leq a \leq c) $. Out of the $9$ possible combinations only $4$ can occur. Combination such as $b \leq c \leq a$ and $a \leq c \leq d$ cannot occur as one forces $a \leq c$ and the other forces $c \leq a$ while $a,b,c$ are distinct. The answers for the possible combinations are:
		\begin{align*}
		(a \leq b \leq c)\, and\, (a \leq c \leq d) \Rightarrow (a \leq b \leq c \leq d) \Rightarrow (a \leq b \leq d)\\
		(a \leq b \leq c)\, and\, (d \leq a \leq c) \Rightarrow (d \leq a \leq b \leq c) \Rightarrow (d \leq a \leq b)\\
		(b \leq c \leq a)\, and\, (c \leq d \leq a) \Rightarrow (b \leq c \leq d \leq a) \Rightarrow (b \leq d \leq a)\\
		(c \leq a \leq b)\, and\, (c \leq d \leq a) \Rightarrow (c \leq d \leq a \leq b) \Rightarrow (d \leq a \leq b)
		\end{align*}
		We can confirm from the definition of $\gamma_\leq$ that $(a \leq b \leq d) \lor (b \leq d \leq a) \lor (d \leq a \leq b)$ is equivalent to $[a,b,d]$.
		\item We know that totality of the total order(def.~\ref{def:torder}) implies that for any two elements either  $a \leq b$ or $b \leq a$. When we extend this to three elements we can compute that there are $6$ different possibilities $(a \leq b \leq c),(b \leq c \leq a),(c \leq b \leq a),(c \leq b \leq a),(b \leq a \leq c),(a \leq c \leq b)$. For visualization we can consider these as the permutations of $3$ numbers. One of these six possibilities has to hold as dictated by totality of the underlying total order $\leq$. Further more we know from the definition of $\gamma_\leq$ that $(a \leq b \leq c) \lor (b \leq c \leq a) \lor(c \leq a \leq b)$ is equivalent to $[a,b,c]$ and $(c \leq b \leq a) \lor (b \leq a \leq c) \lor (a \leq c \leq b)$ is equivalent to $[c,b,a]$. We can see that $[a,b,c]$ and $[c,b,a]$ are mutually exclusive and together cover all the six possibilities. Hence, one of them has to occur.
	\end{compactenum}
\end{proof}
\begin{theorem}
	Let $(X,\leq)$ be a relator space where $X$ is a set and $\leq$ is a total order. Define a ternary relation $\gamma_\leq$ such that for $a,b,c \in X$, $(a \leq b \leq c) \lor (b \leq c \leq a) \lor(c \leq a \leq b)  \Leftrightarrow [a,b,c]$. Then, $\gamma_\leq$ is a cyclic order.
\end{theorem}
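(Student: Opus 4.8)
The plan is to obtain this theorem as an immediate corollary of Lemma~\ref{lm:app}. By Definition~\ref{def:corder}, a ternary relation on $X$ is a cyclic order precisely when it satisfies the four conditions Cyclicity, Antisymmetry, Transitivity and Totality (items $1^o$--$4^o$ there). Lemma~\ref{lm:app} asserts exactly that the ternary relation $\gamma_\leq$, built from the total order $\leq$ via the three rotational disjuncts $(a\leq b\leq c)\lor(b\leq c\leq a)\lor(c\leq a\leq b)$, satisfies each of these four conditions. Hence the proof reduces to citing Lemma~\ref{lm:app} and matching its conclusion $1^o$--$4^o$ against Definition~\ref{def:corder} $1^o$--$4^o$, term by term.

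Before invoking the lemma I would record one preliminary observation: $\gamma_\leq$ is genuinely a ternary relation in the sense of Section~\ref{sec:prelim}, i.e.\ the collection of triples $[a,b,c]$ it yields is a subset of $X\times X\times X$, since for every $(a,b,c)$ the defining disjunction is a well-formed predicate in $\leq$. This legitimizes the application of Definition~\ref{def:corder}. I would then make the correspondence explicit: Cyclicity of $\gamma_\leq$ (Lemma~\ref{lm:app}, $1^o$) is Definition~\ref{def:corder}~$1^o$; Antisymmetry (Lemma~\ref{lm:app}, $2^o$) is Definition~\ref{def:corder}~$2^o$; Transitivity (Lemma~\ref{lm:app}, $3^o$) is Definition~\ref{def:corder}~$3^o$; and Totality (Lemma~\ref{lm:app}, $4^o$) is Definition~\ref{def:corder}~$4^o$. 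Since all four defining axioms hold, $\gamma_\leq$ is a cyclic order; moreover, because the Totality clause is verified, it is a \emph{total} cyclic order and not merely a partial one, so it lies in the class to which the paper restricts attention (cf.\ the remark following Definition~\ref{def:corder}).

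As for the main obstacle, there is essentially none at the level of this theorem: all the combinatorial bookkeeping -- the analysis of the nine possible pairings of disjuncts in the Transitivity argument, together with the ruling-out of the incompatible ones under the distinctness hypothesis on $a,b,c,d$, and the enumeration of the six permutations in the Totality argument -- has already been carried out in the proof of Lemma~\ref{lm:app}. The only point I would re-examine is the Transitivity step, to confirm that exactly the four surviving order-patterns are retained and that the discarded pairings are genuinely impossible when $a,b,c,d$ are distinct; but this too is settled inside the lemma, so here it suffices to cite it and conclude.
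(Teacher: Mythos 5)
Your proposal is correct and matches the paper's proof, which likewise disposes of the theorem by citing Lemma~\ref{lm:app} together with Definition~\ref{def:corder}; your explicit item-by-item matching and the remark that $\gamma_\leq$ is a ternary relation are harmless elaborations of the same argument.
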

\begin{proof}
	The proof follows directly from lemma \ref{lm:app} and the definition of cyclic order given in def.~\ref{def:corder}.
\end{proof}
\begin{figure}
	\centering
	\begin{subfigure}[Order induced by orientation]
		{\includegraphics[width=2in, bb=95 530 170 603,clip]{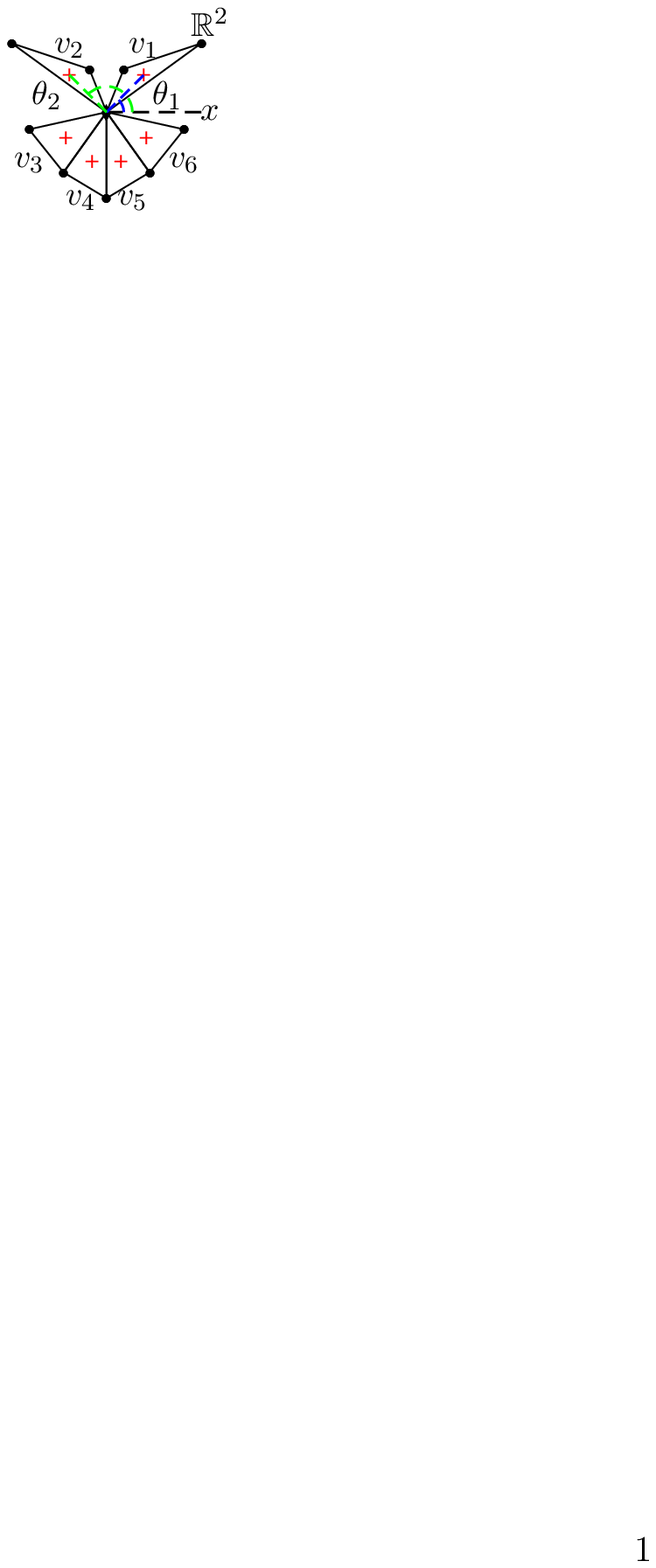}
			\label{subfig:orientorder}}
	\end{subfigure}
	\begin{subfigure}[$1$-maximal cycle]
		{\includegraphics[width=2in, bb=95 530 170 603,clip]{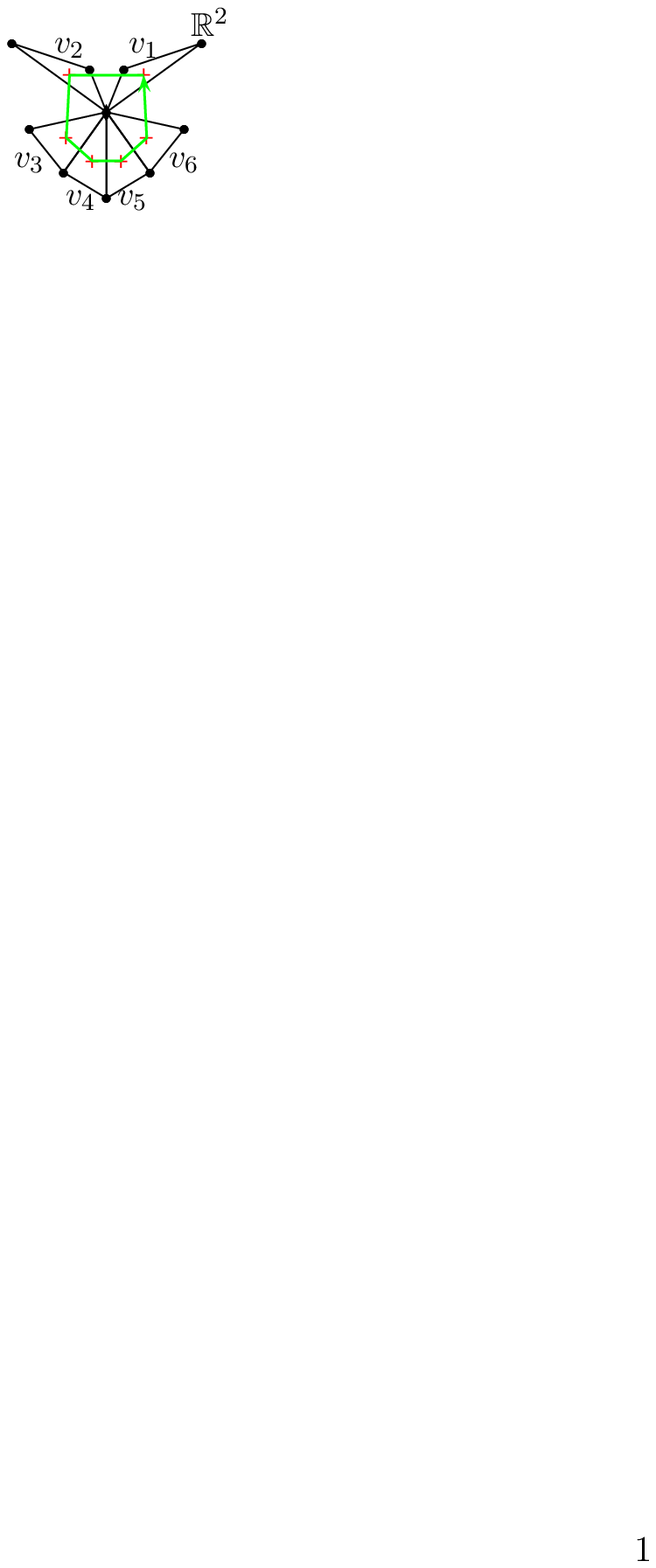}
			\label{subfig:mcyc1}}
	\end{subfigure}
\end{figure}
Now that we have shown that $\gamma_\leq$ is a cyclic order we can induce a proximity on the relator space $(X_k,\gamma_\leq)$, where $X_k$ is the set of real valued descriptions of all triangles in $skcx_k$. Upon this we can induce a proximity $\cnear{\gamma_\leq}$ as defined by def.~\ref{def:cprox}. It can be seen that for $X_k=\{\phi(\triangle_i) \, s.t. \, \triangle_i \in skcx_k\}$ that are ordered as $\phi(\triangle_1) \leq \phi(\triangle_2)\cdots \leq \phi(\triangle_{n-1}) \leq \phi(\triangle_n)$, the induced proximity by the ternary relation $\gamma_\leq$(lemma \ref{lm:app}) would yield $\cnear{\gamma_\leq}(\phi(\triangle_1),\phi(\triangle_2))=\cnear{\gamma_\leq}(\phi(\triangle_2),\phi(\triangle_3))=\cdots=\cnear{\gamma_\leq}(\phi(\triangle_{n-1}),\phi(\triangle_{n}))=\cnear{\gamma_\leq}(\phi(\triangle_n),\phi(\triangle_1))=0$. Now, if we were to draw a path $cnt(\triangle_1)\rightarrow cnt(\triangle_2)\rightarrow \cdots \rightarrow cnt(\triangle_{n-1})\rightarrow cnt(\triangle_n) \rightarrow cnt(\triangle_1)$, where $cnt(\triangle_i)$ is the centroid, it would be a $k-$maximal cycle or $mcyc_k$. It is similar to drawing the proximal graph($\mathop{\mathring{\mathcal{P}}}\limits_\gamma(X_k)$) for $(X,\gamma_\leq)$ but using $cnt(\triangle_i)$ instead of $\phi(\triangle_{n})$. Let us look at an example to clarify this procedure.

\begin{example}
	Consider the triangulation(shown in Figs.~\ref{subfig:orientorder},\ref{subfig:mcyc1}) that is a subset of the $\mathbb{R}^2$. We can see that all the triangles are a part of the MNC or the $skcx_1$. The nucleus is shown as black diamond and the centroids of the triangles are shown as red crosses. It is evident that each of the triangles in $skcx_1$ shares a nonempty intersection with the nucleus and is hence proximal to it and each other as per Lodato proximity. This is an important point, that under the Lodato proximity w.r.t. the nucleus each of the $sk_1$s is equivalent. How to connect them in a cycle so that we can obtain the $mcyc_1$?
	
	We can use the fact that the triangulation is embedded in $\mathbb{R}^2$ to our advantage. We can calculate the orientation of each of the centroids $v_1,v_2, \cdots,v_6$ from the $x$ axis. Suppose the coordinate of the centroids are  $v_i=(x_i,y_i)$, then the orientation $\theta_i=\arctan(\frac{y_i}{x_i})$. We can arrange in the order of increasing orientation angle. For Fig.~\ref{subfig:orientorder} we get $\{\theta_1,\theta_2,\theta_3,\theta_4,\theta_5,\theta_6\}$. We can see that in this case $\phi:2^X \rightarrow \mathbb{R}$ is the function that calculates the orientation of the centroids of the triangles. Now defining a ternary relation $\gamma_\leq$ as in lemma~\ref{lm:app} and then inducing the proximity from the cyclic order we can write $\cnear{\gamma_\leq}(\triangle_1,\triangle_2)=\cnear{\gamma_\leq}(\triangle_2,\triangle_3)=\cnear{\gamma_\leq}(\triangle_3,\triangle_4)=\cnear{\gamma_\leq}(\triangle_4,\triangle_5)=\cnear{\gamma_\leq}(\triangle_5,\triangle_6)=\cnear{\gamma_\leq}(\triangle_6,\triangle_1)=0$. Subsituting $cnt(\triangle_{i})$ for $\triangle_{i}$, this yields a proximity graph that is a cycle $v_1 \rightarrow v_2 \rightarrow v_3 \rightarrow v_4 \rightarrow v_5 \rightarrow v_6 \rightarrow v_1$. This graph is displayed in Fig.~\ref{subfig:mcyc1}.
	
	We must note that by choosing a different $\phi$ we can have cycle ordered in a different way. Another possible choice could be to arrange in the increasing order of area. \qquad \eot
\end{example}

\subsection{Order induced proximities on video frames}\label{subsec:frameord}
We consider approaches to establish an order on video frames, that will lead to the induction of a proximity relation. The first approach considers an order established based on the area of maximal nuclear clusters(MNC) in the triangulated frames. As previously stated, the nerve is a collection of sets(triangles) with a nonempty intersection and a nerve with the most number of sets(triangles) is the MNC.

\begin{algorithm}[!ht]
	\caption{Order Induced Proximity on MNCs in Video Frames based on area}
	\label{alg:mncarea_ord}
	\SetKwData{Left}{left}
	\SetKwData{This}{this}
	\SetKwData{Up}{up}
	\SetKwFunction{Union}{Union}
	\SetKwFunction{FindCompress}{FindCompress}
	\SetKwInOut{Input}{Input}
	\SetKwInOut{Output}{Output}
	\SetKwComment{tcc}{/*}{*/}
	
	\Input{digital video $\mathcal{V}$,Number of keypoints $n$}
	\Output{A set $X=\{\{f_i,mnc_j\}:f_i \in \mathcal{V},mnc_j \subseteq f_i\,and\,i,j \in \mathbb{Z}^+\}$ arranged in increasing order of MNC area}
	\emph{$\mathcal{X} \assign \{\},X \assign \{\}$, declare empty array}\;
	\ForEach{$f_i \in \mathcal{V}$}{
		\emph{$f_i \longmapsto S=\{s_1,\cdots,s_n\}$, where $S$ is the set of keypoints}\;
		\emph{$S \longmapsto \mathcal{T}(S)$, where $\mathcal{T}(S)$ is delaunay triangulation on the keypoints $S$}\;
		\emph{$\mathcal{T}(S) \longmapsto \mathcal{M}$, where $\mathcal{M}=\{mnc_j:j \in \mathbb{Z}^+\}$ is the set of all the MNCs in $\mathcal{T}(S)$}\;
		\ForEach{$mnc_j \in \mathcal{M}$}{
			\emph{$mnc_j \longmapsto a_{ij}$, where $a_{ij}$ is the area of $mnc_j \subset f_i$}\;
			\emph{$\mathcal{X} \assign \{\mathcal{X},\{f_i,mnc_j,a_{ij}\} \}$, appending at the end}\;	
		}
	}
	\emph{$\mathcal{X} \rightarrow \mathcal{X}_{sort}$, where $\mathcal{X}_{sort}$ contains all the $3$-tuples in $\mathcal{X}$ arranged in ascending order of $a_{ij}$}\;
	\ForEach{$\{f_i,mnc_j,a_{ij}\} \in \mathcal{X}_{sort}$}{
		\emph{$\{f_i,mnc_j,a_{ij}\} \longmapsto \{f_i,mnc_j\}$}\;
		\emph{$X \assign \{X,\{f_i,mnc_j\} \}$, appending at the end}\;    
	}
\end{algorithm}

Using these notions let us explain the approach which has been stated in algorithm~\ref{alg:mncarea_ord}. Let $\mathcal{V}$ be the video, that is a collection of frames $\{f_1,\cdots,f_n\}$. For each frame $f_i$ we select keypoints $S=\{s_1, \cdots, s_n\}$ to serve as seeds for Delaunay triangulation $\mathcal{T}(S)$. Once, we have the triangulation we proceed to determining the MNCs that are represented as $\mathcal{M}=\{mnc_1,\cdots,mnc_j\}$. Area of $mnc_j \subset f_i$ is represented as $a_{ij}$. It must be noted that $a_{ij}$ is $\sum \triangle_k$, over all $\triangle_k \in mnc_j \subset f_i$. We have $3$-tuples $\{f_i,mnc_j,a_{ij}\}$ for each frame-MNC pair. These $3$-tuples form the set $\mathcal{X}$, which is then sorted in ascending order for each of area $a_{ij}$ yielding the set $\mathcal{X}_{sort}$. By selecting the first two elements of each $3$-tuple $\{f_i,mnc_j,a_{ij}\} \in \mathcal{X}_{sort}$ we get the corresponding $\{f_i,mnc_j\} \in X$.

For ease of understanding consider that $X=\{x_1,\cdots,x_m\}$, where each $x_i$ is the $2$-tuple that represents a particular frame-MNC pair. Let  $a(x_i)$ be the area of MNC represented by $x_i$. It can be seen that $a(x_i)<=a(x_j)$ for all $j \geq i$ due to the sorting performed in algorithm~\ref{alg:mncarea_ord}. We present some important results regarding this set $X$ sorted with respect to the MNC area.
\begin{lemma}\label{lm:mncarea_ord}
	Let $X=\{x_1,\cdots,x_m\} \, s.t. \, x_1=\{f_i,mnc_j\}$ where $f_i$ is a video frame and $mnc_j \subset f_i$ is a MNC in this frame. Define a function $a:2^{\mathbb{R}^2} \rightarrow \mathbb{R}$, such that $a(x_i)$ is the area of MNC in the frame-MNC pair $x_i$. Then, the inequality relation $\leq$ over the set $A=\{a(x_i): x_i \in X \}$ satisfy the following conditions:
	\begin{compactenum}[1$^o$]
		\item \textbf{Reflexivity:} $a(x_i) \leq a(x_i)$
		\item \textbf{Antisymmetry:} if $a(x_i) \leq a(x_j)$ and $a(x_j) \leq a(x_i)$, then $a(x_i)=a(x_j)$
		\item \textbf{Transitivity:} if $a(x_i) \leq a(x_j)$ and $a(x_j) \leq a(x_k)$, then $a(x_i) \leq a(x_k)$ 
		\item \textbf{Connex property or Totality:} either $a(x_i) \leq a(x_j)$ or $a(x_j) \leq a(x_i)$
	\end{compactenum}
\end{lemma}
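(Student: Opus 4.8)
The plan is to observe that $A=\{a(x_i):x_i\in X\}$ is by construction a subset of $\mathbb{R}$, since the area function $a:2^{\mathbb{R}^2}\to\mathbb{R}$ assigns a real number to each frame--MNC pair $x_i$. Consequently each of the four asserted conditions will be inherited directly from the corresponding standard property of the usual order $\leq$ on $\mathbb{R}$, restricted to the subset $A$; nothing about $\leq$ itself needs to be re-proved.

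First I would dispatch reflexivity: for any $x_i\in X$, the real number $a(x_i)$ satisfies $a(x_i)\leq a(x_i)$ by reflexivity of $\leq$ on $\mathbb{R}$. Next, antisymmetry: if $a(x_i)\leq a(x_j)$ and $a(x_j)\leq a(x_i)$, then $a(x_i)=a(x_j)$ by antisymmetry of $\leq$ on $\mathbb{R}$; here I would stress that this is equality of the \emph{areas}, not of the pairs $x_i,x_j$ themselves, so distinct frame--MNC pairs of equal area are identified only at the level of $A$. Transitivity is handled the same way: $a(x_i)\leq a(x_j)$ and $a(x_j)\leq a(x_k)$ give $a(x_i)\leq a(x_k)$ by transitivity of $\leq$ on $\mathbb{R}$. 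Finally, totality: for any two reals $a(x_i),a(x_j)$ either $a(x_i)\leq a(x_j)$ or $a(x_j)\leq a(x_i)$, by the connex property of $\leq$ on $\mathbb{R}$.

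Taken together with Definitions~\ref{def:porder} and \ref{def:torder}, these four items will show $(A,\leq)$ is a totally ordered set, which in turn legitimizes the sorting step of Algorithm~\ref{alg:mncarea_ord} that produces $\mathcal{X}_{sort}$ and the ordered set $X$ (strictly, $X$ carries the preorder obtained by pulling $\leq$ back along $a$, becoming a genuine total order once frame--MNC pairs of equal area are grouped), and it sets up the induced proximity $\tonear{\leq}$ of Definition~\ref{def:toprox} used afterwards. The only point that genuinely requires an argument, and the closest thing to an obstacle here, is well-definedness of $a$: that the MNC area $a_{ij}=\sum_{\triangle_k\in mnc_j\subset f_i}\triangle_k$ is a single real number. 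This holds because a maximal nuclear cluster is a finite union of planar triangles and planar area is finitely additive, so $a_{ij}$ is a well-defined nonnegative real. Beyond that bit of bookkeeping the lemma is immediate.
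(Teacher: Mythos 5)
Your proposal is correct and follows essentially the same route as the paper: both arguments simply note that the areas $a(x_i)$ are real numbers, so reflexivity, antisymmetry, transitivity and totality are inherited from the standard order $\leq$ on $\mathbb{R}$. The extra remarks you add (well-definedness of the area sum, and that antisymmetry gives equality of areas rather than of frame--MNC pairs) are sensible clarifications but do not change the argument.
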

\begin{proof}
	\begin{compactenum}[1$^o$]
		\item It follows directly from the fact that area of each MNC is equal to itself.
		\item as area function $a$ outputs real numbers and for two real numbers $a,b$ we know that $(a \leq b) \wedge (b \leq a) \Rightarrow (a=b)$.
		\item the area function output is a real number and for three real numbers $a,b,c$ it is known that $(a \leq b) \wedge (b \leq c) \Rightarrow (a \leq c)$.
		\item the area of an MNC is a real number and for any two real numbers $a,b$ it is known that either $a \leq b$ or $b \leq a$.	
	\end{compactenum}
\end{proof}
From this lemma we can arrive at the following result. 
\begin{theorem}\label{thm:mncarea_ord}
	Let $X=\{x_1,\cdots,x_m\} \, s.t. \, x_1=\{f_i,mnc_j\}$ where $f_i$ is a video frame and $mnc_j \subset f_i$ is a MNC in this frame. Define a function $a:2^{\mathbb{R}^2} \rightarrow \mathbb{R}$, such that $a(x_i)$ is the area of MNC in the frame-MNC pair $x_i$. Then, the inequality relation $\leq$ over the set $A=\{a(x_i): x_i \in X \}$ is a total order.
\end{theorem}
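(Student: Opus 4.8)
The plan is to obtain this theorem as an immediate corollary of Lemma~\ref{lm:mncarea_ord}. Recall that Definition~\ref{def:torder} characterizes a total order as precisely a partial order in the sense of Definition~\ref{def:porder} --- that is, a relation satisfying reflexivity, antisymmetry, and transitivity --- which in addition enjoys the connex (totality) property $4^o$. So the argument reduces to checking that the relation $\leq$ on $A = \{a(x_i) : x_i \in X\}$ satisfies all four of these clauses, and this is exactly what the four numbered items of Lemma~\ref{lm:mncarea_ord} assert.

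Concretely, I would proceed in two short steps. First, invoke items $1^o$, $2^o$, $3^o$ of Lemma~\ref{lm:mncarea_ord} to conclude that $(A,\leq)$ satisfies reflexivity, antisymmetry, and transitivity, hence is a partially ordered set in the sense of Definition~\ref{def:porder}. Second, invoke item $4^o$ of the same lemma to conclude that the connex property of Definition~\ref{def:torder} holds for $\leq$ on $A$. Combining the two, $(A,\leq)$ is a totally ordered set, which is the assertion of the theorem.

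There is essentially no obstacle here: the substantive content --- that $a$ is real-valued and that the usual order on $\mathbb{R}$, restricted to any subset and in particular to $A$, is reflexive, antisymmetric, transitive, and connex --- has already been discharged in Lemma~\ref{lm:mncarea_ord}. The only point worth making explicit in the write-up is the logical dependency, so the reader sees that the theorem is a repackaging of the lemma through Definition~\ref{def:torder}; one may also remark that the sorting step in Algorithm~\ref{alg:mncarea_ord} is what renders $A$ and the index ordering $a(x_i)\le a(x_j)$ for $j\ge i$ well-defined, although that ordering of indices is not itself needed for the proof.
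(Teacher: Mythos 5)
Your proposal is correct and follows exactly the paper's route: the theorem is deduced directly from Lemma~\ref{lm:mncarea_ord} together with Definition~\ref{def:torder} (reading its connex clause on top of the partial-order axioms of Definition~\ref{def:porder}). You merely spell out the logical dependency a bit more explicitly than the paper's one-line proof, which is fine.
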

\begin{proof}
	This follows directly from the lemma \ref{lm:mncarea_ord} and  definition \ref{def:torder}.
\end{proof}
As we know that area is a real valued function we can say that areas of $x_i \in X$ form a total order as formed by $(\mathbb{Z},\leq)$ as considered in example~\ref{ex:example1}. Thus, we can also conclude that the Hasse diagram for the areas $a(x_i)$ is similar to the one shown in Fig.~\ref{subfig:integers}. Moreover, this order would induce a proximity on the set $A=\{a(x_i):x_i \in X \}$ as given by def.~\ref{def:toprox}, such that $\tonear{\leq}(a(x_i),a(x_j))=0$ iff $j=i+1$ and $\tonear{\leq}(a(x_i),a(x_j))=1$ otherwise. We can use $\tonear{\leq}$ on $A$ to define a notion of proximity for $X$.  As the set $X$ has frame-MNC pairs arranged in an order of increasing MNC area. We define a proximity relation $\tonear{a_\leq}$ on $X$ such that $\tonear{a_\leq}(x_i,x_j)=0$ iff $j=i+1$ and $\tonear{a_\leq}(x_i,x_j)=0$. Let us explain this proximity on frame-MNC pairs with the help of following example.
\begin{algorithm}[!ht]
	\caption{Extracting subgraphs for a specific frame from the Proximity graph of an order induced proximity on Video frames}
	\label{alg:mncarea_subgraph}
	\SetKwData{Left}{left}
	\SetKwData{This}{this}
	\SetKwData{Up}{up}
	\SetKwFunction{Union}{Union}
	\SetKwFunction{FindCompress}{FindCompress}
	\SetKwInOut{Input}{Input}
	\SetKwInOut{Output}{Output}
	\SetKwComment{tcc}{/*}{*/}
	
	\Input{A set $X=\{\{f_i,mnc_j\}:f_i \in \mathcal{V},mnc_j \subseteq f_i\,and\,i,j \in \mathbb{Z}^+\}$ arranged in some order, frame number under consideration $k$}
	\Output{$G_k$ the subgraph representing order relations for video frame number $k$}
	\emph{$V \assign \{\},E \assign \{\}$, declare empty array}\;
	\ForEach{$\{f_i,mnc_j\} \in X\; s.t.\;i=k$}{
		\emph{$\{f_i,mnc_j\} \longmapsto ind$, the index of $2-$tuple in $X$}\;
		\uIf{$ind=1$}{
			\emph{$V=\{V,X(ind),X(ind+1)\}$ append to the end}\;
			\emph{$E=\{E,de(X(ind),X(ind+1))\}$, $de(a,b)$ is a directed edge from $a$ to $b$}\;
		}
		\uElseIf{$ind=length(X)$}{ 
			\emph{$V=\{V,X(ind-1),X(ind)\}$ }\;
			\emph{$E=\{E,de(X(ind-1),X(ind))\}$}\;
		} 
		\Else{ 
			\emph{$V=\{V,X(ind-1),X(ind),X(ind+1)\}$ }\;
			\emph{$E=\{E,de(X(ind-1),X(ind)),de(X(ind),X(ind+1))\}$}\;
		}
		
	}
	\emph{$G_k \assign (V,E)$, where $V$ is the vertex set and $E$ is the edge set}\;
	
\end{algorithm}
\begin{figure}
	\centering
	\includegraphics[scale=1,bb=125 475 380 665,clip]{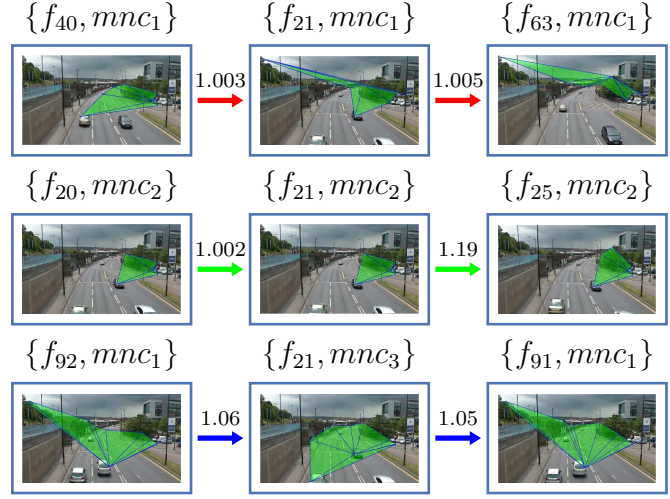}
	\caption{This figure represents the $\tonear{l_\leq}$ for all the frame-MNC pairs,$\{f_{21},mnc_1 \},\{f_{21},mnc_2 \},\{f_{21},mnc_3\}$ in $f_{21}$. This is a proximity relation induced from the total order on the areas of $mnc_j$ in the frames. The arrows are labeled with the weight that scales the area of $mnc$ in the origin frame-MNC pair to the area of $mnc$ in the destination frame}
	\label{fig:framesim_area}
\end{figure}

\begin{example}
	In this example we will use the stock video in MATLAB, "Traffic.mp4" to illustrate how a total order on the areas of the MNCs can be used to induce a proximity on frame-MNC pairs. The process has been detailed in algorithm~\ref{alg:mncarea_ord}. We represent a frame-MNC pair as $(f_i,mnc_j)$ and group them in a set $X=\{x_1,\cdots,x_m\}$. The elements in X are arranged in the increasing order of MNC area, $a(x_i)$. We have seen that $(A,\leq)$, where $A=\{a(mnc_j): mnc_j \in x_i \in X\}$. Def.~\ref{def:toprox} allows a proximity on $A$ such that $\tonear{\leq}(a(x_i),a(x_j))=0$ iff $j=i+1$ and $\tonear{\leq}(a(x_i),a(x_j))=1$ otherwise. This is a direct result of the sorting performed on $X$ in ascending order w.r.t. $a(x_i)$. The condition $(l(x_i) \leq l(x_j)) \wedge (\not\exists l(x_k) \in L \setminus \{l(x_i),l(x_j)\}\; s.t.\;l(x_i)\leq l(x_k) \leq l(x_j))$ in def.~\ref{def:toprox} is reduced to $j=i+1$. Thus, we can write $a(x_1)\leq a(x_2)\leq \cdots \leq a(x_m)$ and induce the proximity relation represented as $\tonear{\leq}(a(x_1),a(x_2))=0 \wedge \tonear{\leq}(a(x_2),a(x_3))=0 \wedge \cdots \wedge \tonear{\leq}(a(x_{m-1}),a(x_m))=0$. Now we can simply lift the proximity from $a(x_i)$ to $x_i$ and simply rewrite this as $\tonear{a_\leq}(x_1,x_2)=0 \wedge \tonear{a_\leq}(x_2,x_3)=0 \wedge \cdots \wedge \tonear{a_\leq}(x_{m-1},x_m)=0$. This gives us a proximity relation on $x_i$, the frame-MNC pairs.
	
	It can be seen from the definition of induced proximity(def.~\ref{def:toprox}) that a frame-MNC pair can at most be related to two other frames, with $x_{i-1}$ is proximal to $x_{i}$ and $x_i$ is proximal to $x_{i+1}$. A frame $f_i$ can have multiple MNCs and thus corresponding number of frame-MNC pairs. All such relations for a particular frame are found using algorithm~\ref{alg:mncarea_subgraph}. This is the implementation of these relations for each of the frame-MNC pairs in a frame. 
	
	We illustrate this using $f_{21}$ from the video having three MNCs and corresponding pairs, $(f_{21},mnc_1),(f_{21},mnc_2),(f_{21},mnc_3)$. The figure~\ref{fig:framesim_area} summarizes the relations for each of these pairs in a path graph corresponding to each of them. The images are labeled with the $(f_i,mnc_j)$ and the edges are labeled with the $\frac{a(x_{i+1})}{a(x_i)}$. It must be noted that $(f_{40},mnc_1)\leq(f_{21},mnc_1)\leq(f_{63},mnc_1)$ and no other frame-MNC pair can be inserted in this chain. MNCs of varying shape can similar areas as is represented by $(f_{40},mnc_1),(f_{21},mnc_1)$ and $(f_{21},mnc_3),(f_{91},mnc_1)$. Moreover, it can be seen that this relation of proximity transcends the temporal order of the frames as $\tonear{a_\leq}((f_{21},mnc_1),(f_{63},mnc_1))=0$.	\qquad \eot
\end{example}

\begin{algorithm}[!ht]
	\caption{Order Induced Proximity on $1$-maximal cycles in Video Frames based on length}
	\label{alg:vortlen_ord}
	\SetKwData{Left}{left}
	\SetKwData{This}{this}
	\SetKwData{Up}{up}
	\SetKwFunction{Union}{Union}
	\SetKwFunction{FindCompress}{FindCompress}
	\SetKwInOut{Input}{Input}
	\SetKwInOut{Output}{Output}
	\SetKwComment{tcc}{/*}{*/}
	
	\Input{digital video $\mathcal{V}$,Number of keypoints $n$}
	\Output{A set $X=\{\{f_i,mnc_j\}:f_i \in \mathcal{V},mnc_j \subseteq f_i\,and\,i,j \in \mathbb{Z}^+\}$ arranged in increasing order of vortex length}
	\emph{$\mathcal{X} \assign \{\},X \assign \{\},C_{sort} \assign \{\}$,  declare empty array}\;
	\ForEach{$f_i \in \mathcal{V}$}{
		\emph{$f_i \longmapsto S=\{s_1,\cdots,s_n\}$, where $S$ is the set of keypoints}\;
		\emph{$S \longmapsto \mathcal{T}(S)$, where $\mathcal{T}(S)$ is delaunay triangulation on the keypoints $S$}\;
		\emph{$\mathcal{T}(S) \longmapsto \mathcal{M}$, where $\mathcal{M}=\{mnc_j:j \in \mathbb{Z}^+\}$ is the set of all the MNCs in $\mathcal{T}(S)$}\;
		\ForEach{$mnc_j \in \mathcal{M}$}{
			\emph{$mnc_j \longmapsto C=\{c_1, \cdots,c_i\}$, where $c_i$ is centroid of $\triangle_i \in mnc_j$}\;
			\emph{$C \longmapsto u$, where $u$ is the centroid of points in $C$}\;
			\emph{$C \longmapsto \Theta=\{\{c_1,\theta_1\},\cdots,\{c_j,\theta_j\}\}$, where $\theta_j=\arctan(\frac{c_{j}(2)-u(2)}{c_{j}(1)-u(1)})$}\;
			\emph{$\Theta \longmapsto \Theta_{sort}$, where $\Theta_{sort}$ is arranged in order of increasing $\theta$}\;
			\ForEach{$\{c_k,\theta_k \} \in \Theta_{sort}$}{
				\emph{$\{c_k, \theta_k\} \longmapsto c_k$}\;
				\emph{$C_{sort} \assign \{C_{sort},c_k\}$}\;
			}
			\emph{$\Theta_{sort} \longmapsto C_{sort}$, project onto $1st$ cordinate}\;
			\emph{$C_{sort}=\{cs_1,\cdots,cs_j\} \longmapsto v_j=cyc(cs_1,\cdots,cs_j)$}\;
			\emph{$v_j \longmapsto l_{ij}$, $l_{ij}$ is the length of $v_j$}
			\emph{$\mathcal{X} \assign \{\mathcal{X},\{f_i,v_j,l_{ij}\} \}$, appending at the end}\;	
		}
	}
	\emph{$\mathcal{X} \rightarrow \mathcal{X}_{sort}$, where $\mathcal{X}_{sort}$ contains all the $3$-tuples in $\mathcal{X}$ arranged in ascending order of $l_{ij}$}\;
	\ForEach{$\{f_i,v_j,l_{ij}\} \in \mathcal{X}_{sort}$}{
		\emph{$\{f_i,v_j,l_{ij}\} \longmapsto \{f_i,v_j\}$}\;
		\emph{$X \assign \{X,\{f_i,v_j\} \}$, appending at the end}\;    
	}
\end{algorithm}
Another method to induce a proximity relation is to use the length of $1$-maximal cycle $mcyc_1$ that is the cycle constructed using the centroids of triangles in the MNC. A method for constructing such cycles has been discussed in section~\ref{subsec:mcv}. The method has been detailed in algorithm~\ref{alg:vortlen_ord}. For each frame $f_i \in \mathcal{V}$, we select keepoints $S=\{s_1,\cdots,s_n\}$ upon which the Delaunay triangulation $\mathcal{T}(S)$ is constructed. We determine $\mathcal{M}=\{mnc_1,\cdots,mnc_j\}$, the set of MNCs in $f_i$. For each $mnc_j \in \mathcal{M}$ we determine the centroids of triangles constituting it,$C=\{c_1,\cdots,c_i\}$.

We calculate the centroid of points in $C$ denoting it as $u$, and then using this as the origin we calculate orientation $\theta_i$ of the position vectors of $c_i \in C$. Now connect these points in the increasing order of $\theta_i$ and close the loop to get $mcyc_1$. The $mcyc_1$ for $mnc_j$ is denoted $v_j$. The length of $v_j \subset f_i$ is $l_{ij}$. We form an array of $3$-tuples $\mathcal{X}=\{\{f_i,v_j,l_{ij}\}: f_i \in \mathcal{V},v_j \subset mnc_j\}$. This array is sorted in the ascending order of $l_{ij}$ to yield $X_{sort}$. Projection of the $3$-tuple $\{f_i,v_j,l_{i,j}\}$ onto the first two elements gives us the frame-cycle pair $\{f_i,v_j \}$. The set $X$ is the collection of all such pairs arranged in increasing order of length $l_{ij}$.

We express this set $X$ as $\{x_1,\cdots,x_m\}$ where each $x_m=\{f_i,v_j\}$ is a frame-cycle pair. Let $l(x_i)$ be the length of $_j$, the cycle represented by $x_i$. Then we can see that due to the sorting performed $l(x_i) \leq l(x_j)$ for all $j \geq i$. We present the following results.
\begin{lemma}\label{lm:vortlen_ord}
	Let $X=\{x_1,\cdots,x_m\} \, s.t. \, x_1=\{f_i,v_j\}$ where $f_i$ is a video frame and $v_j \subset f_i$ is a $mcyc_1$ in this frame. Define a function $l:2^{\mathbb{R}^2} \rightarrow \mathbb{R}$, such that $l(x_i)$ is the length of $mcyc_1$ in the frame-cycle pair $x_i$. Then, the inequality relation $\leq$ over the set $L=\{l(x_i): x_i \in X \}$ satisfy the following conditions:
	\begin{compactenum}[1$^o$]
		\item \textbf{Reflexivity:} $l(x_i) \leq l(x_i)$
		\item \textbf{Antisymmetry:} if $l(x_i) \leq l(x_j)$ and $l(x_j) \leq l(x_i)$, then $l(x_i)=l(x_j)$
		\item \textbf{Transitivity:} if $l(x_i) \leq l(x_j)$ and $l(x_j) \leq l(x_k)$, then $l(x_i) \leq l(x_k)$ 
		\item \textbf{Connex property or Totality:} either $l(x_i) \leq l(x_j)$ or $l(x_j) \leq l(x_i)$
	\end{compactenum}
\end{lemma}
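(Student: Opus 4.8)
The plan is to reduce the four claimed properties of $\leq$ on $L$ to the corresponding properties of the standard order on the real line. The key observation is that, by construction, $l$ is a function into $\mathbb{R}$, so $L = \{l(x_i) : x_i \in X\}$ is a subset of $\mathbb{R}$, and the relation $\leq$ appearing in the statement is nothing but the usual numerical order restricted to $L$. Consequently each condition is inherited pointwise from $(\mathbb{R},\leq)$.

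Concretely, I would proceed property by property, exactly paralleling the proof of Lemma~\ref{lm:mncarea_ord} with the area function $a$ replaced by the length function $l$. For \textbf{reflexivity}, for any $x_i \in X$ the real number $l(x_i)$ satisfies $l(x_i) \leq l(x_i)$. For \textbf{antisymmetry}, if $l(x_i) \leq l(x_j)$ and $l(x_j) \leq l(x_i)$ then, since these are real numbers and $(r \leq s)\wedge(s \leq r)\Rightarrow r=s$ holds in $\mathbb{R}$, we get $l(x_i) = l(x_j)$. For \textbf{transitivity}, if $l(x_i) \leq l(x_j)$ and $l(x_j) \leq l(x_k)$ then $l(x_i) \leq l(x_k)$ because transitivity holds for real inequalities. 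For \textbf{totality}, given any $x_i, x_j \in X$ the real numbers $l(x_i)$ and $l(x_j)$ are comparable, so either $l(x_i) \leq l(x_j)$ or $l(x_j) \leq l(x_i)$. One only needs to remark beforehand that $l$ is well defined: each frame-cycle pair $x_i = \{f_i, v_j\}$ determines a unique $1$-maximal cycle $v_j \subset f_i$ and hence a definite length $l_{ij} \in \mathbb{R}$, which is exactly the quantity computed in Algorithm~\ref{alg:vortlen_ord}.

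There is essentially no obstacle here; the statement is a routine transfer of the order axioms from $\mathbb{R}$ to the subset $L$, and the only thing worth stating explicitly is that $l$ takes values in $\mathbb{R}$ so that the ambient order is genuinely the numerical one. This lemma then feeds directly into the analogue of Theorem~\ref{thm:mncarea_ord}, concluding that $\leq$ on $L$ is a total order and hence (via Theorem~\ref{thm:proxhasseto}) that the induced proximity $\tonear{\leq}$ on $L$, and the lifted proximity on the frame-cycle pairs, has Hasse-diagram structure identical to that of Fig.~\ref{subfig:integers}.
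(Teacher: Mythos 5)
Your proposal is correct and matches the paper's own argument: the paper likewise disposes of each of the four conditions by noting that $l$ takes real values, so reflexivity, antisymmetry, transitivity, and totality are inherited directly from the usual order on $\mathbb{R}$. Your added remark that $l$ is well defined on frame-cycle pairs is a harmless (and slightly more careful) elaboration, but the route is the same.
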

\begin{proof}
	\begin{compactenum}[1$^o$]
		\item It follows directly from the fact that length of each $mcyc_1$ is equal to itself.
		\item as the range of $l$ is the set $\mathbb{R}$ and for two real numbers $a,b$ we know that $(a \leq b) \wedge (b \leq a) \Rightarrow (a=b)$.
		\item the length function output is a real number and for three real numbers $a,b,c$ it is known that $(a \leq b) \wedge (b \leq c) \Rightarrow (a \leq c)$.
		\item the length is a real number and for any two real numbers $a,b$ it is known that either $a \leq b$ or $b \leq a$.	
	\end{compactenum}
\end{proof}
This lemma leads to the following theorem. 
\begin{theorem}\label{thm:vortlen_ord}
	Let $X=\{x_1,\cdots,x_m\} \, s.t. \, x_1=\{f_i,mnc_j\}$ where $f_i$ is a video frame and $v_j \subset f_i$ is a $mcyc_1$ in this frame. Define a function $l:2^{\mathbb{R}^2} \rightarrow \mathbb{R}$, such that $l(x_i)$ is the length of $mcyc_1$ in the frame-cycle pair $x_i$. Then, the inequality relation $\leq$ over the set $L=\{l(x_i): x_i \in X \}$ is a total order.
\end{theorem}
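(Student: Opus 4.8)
The plan is to deduce the statement directly from Lemma~\ref{lm:vortlen_ord} and the definitions of order given in Section~\ref{sec:prelim}, in exactly the way the area-based order was handled in Theorem~\ref{thm:mncarea_ord}. First I would recall that, by Definition~\ref{def:porder}, a binary relation on a set is a \emph{partial order} as soon as it is reflexive, antisymmetric, and transitive, and that, by Definition~\ref{def:torder}, a partial order becomes a \emph{total order} once it also satisfies the connex (totality) property.

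Next, I would observe that Lemma~\ref{lm:vortlen_ord} already verifies precisely these four conditions for the relation $\leq$ restricted to $L = \{l(x_i): x_i \in X\}$: items $1^o$, $2^o$, and $3^o$ of the lemma give reflexivity, antisymmetry, and transitivity, so $(L,\leq)$ is a partially ordered set, while item $4^o$ supplies totality. Assembling these facts, $\leq$ is a total order on $L$, which is exactly the assertion of the theorem.

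The argument has no genuine obstacle, since every step is a direct invocation of an earlier result; the only point worth flagging is that $\leq$ here is to be read as the restriction of the usual order of $\mathbb{R}$ to the finite set of cycle lengths $L$, and that such a restriction inherits all four order axioms. This is precisely what Lemma~\ref{lm:vortlen_ord} checks entrywise via the fact that $l$ is $\mathbb{R}$-valued, so nothing further needs to be shown.
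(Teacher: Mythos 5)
Your proposal is correct and matches the paper's own argument: the paper likewise proves Theorem~\ref{thm:vortlen_ord} by citing Lemma~\ref{lm:vortlen_ord} for reflexivity, antisymmetry, transitivity, and totality of $\leq$ on $L$, and then invoking Definition~\ref{def:torder}. Your version simply spells out the assembly of the four conditions a bit more explicitly, which is fine.
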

\begin{proof}
	This follows directly from the lemma \ref{lm:vortlen_ord} and  definition \ref{def:torder}.
\end{proof}
Thus, similar to the case with MNC area we can see that length of $mcyc_1$ is areal number and the order on lengths of such cycles is a total order simialr to $(\mathbb{Z},\leq)$ considered in example \ref{ex:example1}. Accordingly the Hasse diagram is similar in structure to Fig.~\ref{subfig:integers}. Thus, this order induces a proximity on $L=\{l(x_i): x_i \in X \}$ as per definition~\ref{def:toprox}. It can be seen that $\tonear{\leq}(l(x_i),l(x_j))=0$ iff $j=i+1$ and $\tonear{\leq}(l(x_i),l(x_j))=1$ otherwise. This proximity on $L$ can be extended to a similar notion on set $X$. The set $X$ as we know from algorithm~\ref{alg:vortlen_ord} is the list of frame-cycle pairs arranged in the increasing order of length. Thus, we define $\tonear{l_\leq}$ a proximity relation in X such that $\tonear{l_\leq}(x_i,x_j)=0$ iff $j=i+1$ and $\tonear{l_\leq}(x_i,x_j)=1$ otherwise. We explain this with the following example.
\begin{figure}
	\centering
	\includegraphics[scale=1,bb=125 475 380 665,clip]{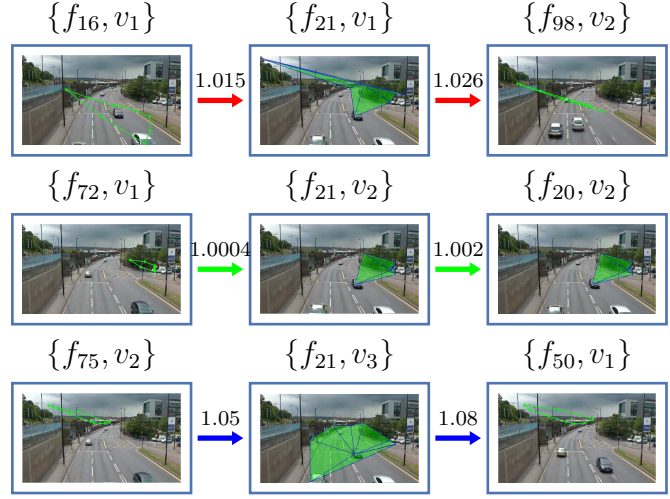}
	\caption{This figure represents the $\tonear{l_\leq}$ for all the frame cycle pairs,$\{f_{21},v_1 \},\{f_{21},v_2 \},\{f_{21},v_3\}$ in $f_{21}$. This is a proximity relation induced from the total order on the lengths of $mcyc_1$ in the frames. The arrows are labelled with the weight that scales the length of $mcyc_1$ on the origin frame-cycle pair to the length of $mcyc_1$ in the destination frame-cycle pair.}
	\label{fig:framesim_length}
\end{figure}
\begin{example} 
	In this example we consider how a total order(def.~\ref{def:torder}) on the length of $mcyc_1$ induces a proximity $\tonear{l_\leq}$ on the video frames. We use "Traffic.mp4" which is a stock video in MATLAB. We tesselate each frame $f_i$ and construct $v_j$ for each MNC $mnc_j \subset f_i$. It must be noted that we consider frame-cycle pairs as an entity i.e. if a frame has multiple MNC leading to multiple $mcyc_1$ each pair will be considered separately. As we have seen that $X=\{x_1,\cdots,x_m\}$ where $x_m=\{f_i,v_j\}$ is a frame-cycle pair is sorted with increasing length of $v_j$ represented as $l(v_j)$. $(L,\leq)$ where $L=\{l(v_i):v_i \in x_i \in X\}$, is a totaly orderd set. Using the def.~\ref{def:toprox} we can easily determine that $\tonear{\leq}(l(x_i),l(x_j))=0$ iff $j=i+1$ and $\tonear{\leq}(l(x_i),l(x_j))=1$ otherwise. This happens because we have sorted $X$ in ascendimg order w.r.t to $l(x_i)$, for $x_i \in X$. Thus the condition $(l(x_i) \leq l(x_j)) \wedge (\not\exists l(x_k) \in L \setminus \{l(x_i),l(x_j)\}\; s.t.\;l(x_i)\leq l(x_k) \leq l(x_j))$ in def.~\ref{def:toprox} is reduced to $j=i+1$. We just lift this proximity relation on the lengths $l(x_i)$ to the frame-cycle pairs $x_i$ and term it $\tonear{l_\leq}$. As discussed this is defined as $\tonear{l_\leq}(x_i,x_j)=0$ iff $j=i+1$ and $\tonear{l_\leq}(x_i,x_j)=1$ otherwise. Thus, as we can write $l(x_1) \leq l(x_2) \leq \cdots \leq l(x_m)$ we can write the induced proximites as $\tonear{\leq}(l(x_1),l(x_2))=0 \wedge \tonear{\leq}(l(x_2),l(x_3))=0 \wedge \cdots \wedge \tonear{\leq}(l(x_{m-1}),l(x_m))=0$.  By lifting the proximity from $l(x_i)$ to $x_i$ we have $\tonear{l_\leq}$. For which we can write $\tonear{l_\leq}(x_1,x_2)=0 \wedge \tonear{l_\leq}(x_2,x_3)=0 \wedge \cdots \wedge \tonear{l_\leq}(x_{m-1},x_m)=0$. Thus, we have a proximity relation on the frame-cycle pairs.
	
	We can see that a frame-cycle $x_i$ pair can be related to two other pairs, with $x_{i-1}$ being proximal to $x_i$ and $x_i$ being proximal to $x_{i+1}$. So for a particular frame we can have multiple frame-cycle pairs due to multiple MNC. We determine all such relations for frame-cycle pairs of a particular $f_i$. This is done using algorithm~\ref{alg:mncarea_subgraph} which is an implementation of the explained relations. To illustrate our point we choose a frame $f_{21}$ having three $mcyc_1$ represented as $\{f_{21},v_1 \},\{f_{21},v_2 \},\{f_{21},v_3\}$. In fig.~\ref{fig:framesim_length} we represent the proximity relations for each of these pairs. We have a path graph for each of the three frame-cycle pairs. Each of the images is labelled with the corresponding $(f_i,v_j)$. The ratio $\frac{l(x_{i+1})}{l(x_i)}$ is labelled as a weight on each of the arrows. It must be noted that $l(\{f_{16},v_1\}) \leq l(\{f_{21},v_1\}) \leq l(\{f_{98},v_2\}) $ and no other frame-cycle pair can be inserted in between the terms in this inequality chain. Moreover, we see that $mcyc_1$ of very different shapes can have similar lengths such as $\{f_{98},v_2\},\{f_{21},v_1\}$ and $\{f_{75},v_2\},\{f_{21},v_3\}$. An important thing to note is that this proximity relates frames that may not be temporally adjacent such as $f_{98}$ and $f_{21}$.  \qquad \eot
\end{example} 

\section{Conclusions}\label{sec:conclusions}
This paper defines and studies the proximity relations induced by partial, total and cyclic orders.The equivalence between the Hasse diagram of the orderd set and the proximity graph of the induced relation has been established. Moreover, an application of this induced proximity to constructing maximal centroidal vortices has been presented. Another application with regards to inducing a proximity relation on video frames has also been presented.
\bibliographystyle{amsplain}
\bibliography{orderrefs}

\providecommand{\bysame}{\leavevmode\hbox to3em{\hrulefill}\thinspace}
\providecommand{\MR}{\relax\ifhmode\unskip\space\fi MR }
\providecommand{\MRhref}[2]{%
  \href{http://www.ams.org/mathscinet-getitem?mr=#1}{#2}
}
\providecommand{\href}[2]{#2}
\begin{thebibliography}{10}

\bibitem{ahmad2018maximal}
M.Z. Ahmad and J.F. Peters, \emph{Maximal centroidal vortices in
  triangulations. a descriptive proximity framework in analyzing object
  shapes}, Theory and Applications of Mathematics \& Computer Science
  \textbf{8} (2018), no.~1, 39--59.

\bibitem{Lodato1964}
M.W. Lodato, \emph{On topologically induced generalized proximity relations i},
  Proc. Amer. Math. Soc. \textbf{15} (1964), 417--422.

\bibitem{Lodato1966}
\bysame, \emph{On topologically induced generalized proximity relations ii},
  Pacific J. Math. \textbf{17} (1966), 131--135.

\bibitem{Peters2016CP}
J.F. Peters, \emph{Computational proximity. {E}xcursions in the topology of
  digital images}, Intelligent Systems Reference Library 102, Springer, 2016,
  viii + 445pp., DOI: 10.1007/978-3-319-30262-1, MR3727129.

\bibitem{peters2015strongly}
J.F. Peters and C.~Guadagni, \emph{Strongly near proximity$\backslash$\ \&\
  hyperspace topology}, preprint arXiv:1502.05913 (2015).

\bibitem{Smirnov1952}
Ju.~M. Smirnov, \emph{On proximity spaces}, Math. Sb. (N.S.) \textbf{31}
  (1952), no.~73, 543--574, English translation: Amer. Math. Soc. Trans. Ser.
  2, 38, 1964, 5-35.

\bibitem{Smirnov1952a}
\bysame, \emph{On proximity spaces in the sense of {V}.{A}. {E}fremovi\u{c}},
  Math. Sb. (N.S.) \textbf{84} (1952), 895--898, English translation: Amer.
  Math. Soc. Trans. Ser. 2, 38, 1964, 1-4.

\bibitem{Szaz1987}
\'{A} Sz\'{a}z, \emph{Basic tools and mild continuities in relator spaces},
  Acta Math. Hungar. \textbf{50} (1987), no.~3-4, 177--201, MR0918156.

\bibitem{Szaz2000}
\bysame, \emph{An extension of {K}elley's closed relation theorem to relator
  spaces}, FILOMAT \textbf{14} (2000), 49--71, MR1953994.

\bibitem{Szaz2014}
\bysame, \emph{Generalizations of galois and pataki connections to relator
  spaces}, J. Int. Math. Virtual Inst. \textbf{4} (2014), 43--75, MR3300304.

\bibitem{Cech1966}
E.~\u{C}ech, \emph{Topological spaces}, John Wiley \& Sons Ltd., London, 1966,
  fr seminar, Brno, 1936-1939; rev. ed. Z. Frolik, M. Kat\u{e}tov.

\bibitem{Wallman1938}
H.~Wallman, \emph{Lattices and topological spaces}, Annals of Math. \textbf{39}
  (1938), no.~1, 112--126.

\end{thebibliography}
\end{document}